\tikzstyle arrowstyle=[scale=1]
\tikzstyle directed=[postaction={decorate,decoration={markings,mark=at position 0.75 with{\arrow[arrowstyle]{stealth}}}}]
\newtheorem{theorem}{Theorem}[section]
\newtheorem{corollary}[theorem]{Corollary}
\newtheorem{definition}[theorem]{Definition}
\newtheorem{lemma}[theorem]{Lemma}
\newtheorem{claim}{Claim}
\newenvironment{Proof}{\noindent {\bf Proof.}}{\rule{3mm}{3mm}\par\medskip}
\DeclareMathOperator{\supp}{{\rm supp}}
\DeclareMathOperator{\suppc}{{\rm supp}^{\mathrm c}}
\newcommand{\JCTB}{{\it J. Combin. Theory Ser. B}}
\newcommand{\JGT}{{\it J. Graph Theory}}
\newcommand{\DM}{{\it Discrete Math.}}
\newcommand{\SIAMDM}{{\it SIAM J. Discrete Math.}}
\newcommand{\CJM}{{\it Canad. J. Math.}}
\newcommand{\JLMS}{{\it J. London Math. Soc.}}
\newcommand{\PLMS}{{\it Proc. London Math. Soc.}}
\newcommand{\EJC}{{\it European J. Combin.}}
\newcommand{\ElecCom}{{\it Electronic Journal of Combinatorics}}
\begin{document}
\title{Signed graphs: from modulo flows to integer-valued flows}
\author{Jian Cheng, You Lu, Rong Luo, Cun-Quan Zhang\\
Department of Mathematics\\
West Virginia University\\
Morgantown, WV 26506\\
Email: \{jiancheng, yolu1, rluo, cqzhang\}@math.wvu.edu }
\date{}
\maketitle

\begin{abstract}
Converting modulo flows into integer-valued flows is one of the most critical steps in the study of integer flows.
Tutte and Jaeger's pioneering work shows the equivalence of modulo flows and integer-valued flows for ordinary graphs.
However, such equivalence does not hold any more for signed graphs.
This motivates us to study how to convert modulo flows into integer-valued flows for signed graphs.
In this paper, we generalize some early results by Xu and Zhang (\DM~299, 2005),
Schubert and Steffen (\EJC~48, 2015), and Zhu (\JCTB~112, 2015),
and show that, for signed graphs, every modulo $(2+\frac{1}{p})$-flow with $p \in {\mathbb Z}^+ \cup \{\infty\}$
can be converted/extended into an integer-valued flow.
\end{abstract}

\begin{center}
{\bf\small Keyworks:} {\small\it Signed graph; Integer flow; Circular flow; Modulo orientation}
\end{center}

\section{Introduction}

In flow theory, an integer-valued flow and a modulo flow are different by their definitions.
For ordinary graphs, Tutte showed that {\em a graph admits an integer-valued nowhere-zero $k$-flow
if and only if it admits a modulo nowhere-zero $k$-flow.}
We also notice that although most landmark results are stated as integer-valued flow results,
due to the theorem by Tutte, they were initially proved for modulo flows, such as,
the $8$-flow theorem by Jaeger~\cite{Jaeger1979},
the $6$-flow theorem by Seymour~\cite{Seymour1981},
and the weak $3$-flow theorem by Thomassen~\cite{Thomassen2012}.

However, Tutte's result cannot be applied for signed graphs (see Fig.~\ref{FIG: no-3-flow}).
That is, there is a big gap between modulo flows and integer-valued flows for signed graphs.
The first known result was proved by Bouchet~\cite{Bouchet1983} in his study of chain-groups.
\begin{theorem}
[\cite{Bouchet1983}, Proposition 3.5]\label{TH: Bouchet}
If a signed graph $(G,\sigma)$ admits a modulo $k$-flow $f_1$,
then it admits an integer-valued $2k$-flow $f_2$ with $\supp(f_1)\subseteq \supp(f_2)$.
\end{theorem}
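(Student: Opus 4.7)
The plan is to promote the modulo $k$-flow $f_1$ to an integer-valued function $\tilde f_1$, measure its vertex-boundary defect, and then cancel the defect by a careful correction that stays inside the $2k$-flow range while preserving the support. Fix a bidirected orientation of $(G,\sigma)$ and let $\tilde f_1(e)\in\{1,\dots,k-1\}$ be the canonical integer lift of $f_1(e)$ for $e\in\supp(f_1)$ (after reversing $e$ if necessary), and $\tilde f_1(e)=0$ for $e\notin\supp(f_1)$. Because $f_1$ is a modulo $k$-flow, $\partial\tilde f_1(v)=k\,b(v)$ for some integer $b(v)$ at every vertex $v$, and a degree count gives $|b(v)|<\deg_{\supp(f_1)}(v)$.

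The heart of the proof is to construct a $\{0,1\}$-valued function $\chi:E(G)\to\{0,1\}$, supported on $\supp(f_1)$, with $\partial\chi(v)=2b(v)$ at every $v$ and with $\chi(e)=0$ whenever $\tilde f_1(e)=k/2$ (a condition relevant only for even $k$). Realizing $2b$ as the boundary of an integer-valued chain is a parity question: summing $\partial\tilde f_1(v)$ over any connected component $C$ of $(G,\sigma)$ gives $0$ in a balanced component (so $\sum_{v\in C}b(v)=0$) and an even integer in an unbalanced one, since positive edges contribute $0$ while negative edges contribute $\pm 2\tilde f_1$; thus $2b$ lies in the image of the boundary map. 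The $\{0,1\}$-integrality, together with the forbidden-edge constraint, is obtained by a signed-circuit routing argument inside $\supp(f_1)$: in balanced components the $2b$-demand is absorbed by balanced cycles (two units each), and in unbalanced components by barbells (two negative cycles linked by a path, which can transport odd amounts); the slack $|b(v)|<\deg_{\supp(f_1)}(v)$ is what makes it possible to reroute around the forbidden half-value edges.

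Given such a $\chi$, set $f_2:=2\tilde f_1-k\chi$. Flow conservation is immediate: $\partial f_2(v)=2k\,b(v)-k\cdot 2b(v)=0$. Boundedness: for $e\in\supp(f_1)$, $f_2(e)\in\{2\tilde f_1(e),\,2\tilde f_1(e)-k\}\subseteq\{-(2k-2),\dots,2k-2\}$, and for $e\notin\supp(f_1)$, $f_2(e)=0$. Support: on $\supp(f_1)$, if $\chi(e)=0$ then $f_2(e)=2\tilde f_1(e)\neq 0$, and if $\chi(e)=1$ then $f_2(e)=2\tilde f_1(e)-k$ vanishes only when $\tilde f_1(e)=k/2$, which the construction of $\chi$ excludes. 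Hence $f_2$ is the required integer $2k$-flow with $\supp(f_1)\subseteq\supp(f_2)$.

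The main obstacle is the construction of $\chi$, which is a signed-graph analogue of the $T$-join problem with forbidden edges. The crux is to handle simultaneously the balanced and unbalanced components via a signed-circuit decomposition and, when $k$ is even, to reroute around the half-value edges using the slack $|b(v)|<\deg_{\supp(f_1)}(v)$; this combinatorial step is where most of the work will go.
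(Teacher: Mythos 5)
This theorem is quoted by the paper from Bouchet's 1983 article and is not reproved in the text, so your argument has to be measured against Bouchet's original one. Your scheme --- lift $f_1$ to an integer-valued $\tilde f_1$, write $\partial\tilde f_1=kb$, and repair $2\tilde f_1$ by subtracting $k\chi$ for a $\{0,1\}$-valued $\chi$ with $\partial\chi=2b$ --- is a genuinely different route from Bouchet's, who instead minimizes $\sum_v|\partial g(v)|$ over integer lifts $g\equiv f_1\pmod k$ and shows a nonzero total defect can always be decreased by pushing $\pm k$ along a suitable walk. Unfortunately your version has a real gap, and the key object need not exist as you have set it up.

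The problem is the existence of $\chi$. A $\{0,1\}$-valued function supported on $\supp(f_1)$ satisfies $|\partial\chi(v)|\le \deg_{\supp(f_1)}(v)$ (counting half-edges), so you need $|2b(v)|\le\deg_{\supp(f_1)}(v)$; the slack you record, $|b(v)|<\deg_{\supp(f_1)}(v)$, is off by a factor of two and is genuinely insufficient. Concretely, take $k=4$ and the signed graph on two vertices $u,v$ joined by three negative edges, oriented so that all six half-edges carry $\tau=+1$ (legal, since $\tau(h_1)\tau(h_2)=+1=-\sigma(e)$ for a negative edge), with $f_1$ taking the values $3,3,2$. Then $\partial f_1(u)=\partial f_1(v)=8\equiv 0\pmod 4$, the canonical lift is $\tilde f_1=f_1$ itself, and $b(u)=b(v)=2$; you would need $\partial\chi(u)=4$ using only three edges, which is impossible. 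No circuit-routing argument can produce your $\chi$ from the lift with values in $\{1,\dots,k-1\}$. The natural repair is to take the symmetric lift with $|\tilde f_1(e)|\le k/2$, which gives $|b(v)|\le\deg_{\supp(f_1)}(v)/2$ and removes the counting obstruction; but even then the existence of a $\{0,1\}$-valued $\chi$ with prescribed boundary $2b$, supported on $\supp(f_1)$ and avoiding the edges with $|\tilde f_1(e)|=k/2$, is a nontrivial bidirected degree-constrained subgraph problem that you only gesture at, and it is exactly where the content of the theorem lives. Either supply that feasibility argument in full (with the corrected lift), or switch to Bouchet's local-adjustment scheme, which sidesteps the question entirely.
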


In this paper, Theorem~\ref{TH: Bouchet} is improved for some important cases:
modulo $2$-flows, modulo $3$-flows, and modulo circular $(2+\frac{1}{p})$-flows.

\subsection{Basic definitions}

Graphs considered here may have multiple edges or loops.
Let $G$ be a graph with {\em vertex set} $V(G)$ and {\em edge set} $E(G)$.
For a vertex $v$, we denote by $E_G(v)$ the set of edges incident with $v$,
and denote $d_G(v) = |E_G(v)|$ (known as the {\em degree} of $v$).
When no confusion is caused, we simply use $E(v)$ and $d(v)$ for short. Let $X$ and $Y$ be two disjoint vertex sets.
We denote by $E(X,Y)$ the set of edges with one end in $X$ and the other end in $Y$, and by $e(X,Y) = |E(X,Y)|$.
An edge set $F$ is an {\em odd-$\lambda$-edge cut} if $|F|=\lambda$ is odd and $G-F$ has more components than $G$.
A graph $G$ is {\em odd-$\lambda$-edge-connected} if it contains no odd-$k$-edge cut for any $k \leq \lambda -2$.
The {\em odd-edge-connectivity} of $G$ is the smallest integer $\lambda$ for which $G$ is odd-$\lambda$-edge-connected.
If $F=\{e\}$, then $e$ is a {\em bridge} of $G$. A graph $G$ is {\em bridgeless} if it contains no bridges.

A {\em signed graph} is a graph $G$ associated with a {\em signature} $\sigma\colon E(G)\to\{\pm1\}$.
An edge $e$ is {\em positive} if $\sigma(e) =1$ and {\em negative} otherwise.
Every edge of $G$ consists of two half-edges, each of which is incident with exactly one end of this edge.
For a vertex $v$, denote by $H(v)$ the set of all half-edges incident with $v$.
Let $H(G) =\bigcup_{v\in V(G)} H(v)$.
For a half-edge $h$, we use $e_{h}$ to denote the edge containing $h$.
An {\em orientation} of $(G,\sigma)$ is a mapping $\tau\colon H(G)\to\{\pm1\}$
such that $\tau(h_{1})\tau(h_{2}) = -\sigma(e)$ for $e\in E(G)$,
where $h_{1}$ and $h_{2}$ are the two half-edges of $e$.

For a signed graph $(G,\sigma)$, {\it switching} at a vertex $u$ means reversing the signs of all edges incident with $u$.
Let $\mathcal{X}_{(G,\sigma)}$ be the set of signatures of $G$ obtained from $\sigma$ via a sequence of switching operations.
The {\em negativeness} of $G$ is the smallest integer $q$ for which $G$ has a signature $\sigma'\in\mathcal{X}_{(G,\sigma)}$
with exactly $q$ negative edges.

\subsection{Integer-valued flows in signed graphs}

\begin{definition}
Let $(G,\sigma)$ be a signed graph associated with an orientation $\tau$.
Let $k$ be a positive integer and $f\colon E(G) \to \mathbb Z$ be a mapping such that
$0\leq|f(e)|\leq(k-1)$ for every edge $e\in E(G)$.
The {\em boundary} of $f$ at a vertex $v$ is defined as $\partial f(v) =\sum_{h\in H(v)} f(e_{h})\tau (h)$.
The mapping $f$ is an {\em integer-valued $k$-flow} (resp.~{\em modulo $k$-flow}) of $(G,\sigma)$
if $\partial f(v) = 0$ (resp.~$\partial f(v)\equiv 0 \pmod k$) for each vertex $v \in V(G)$.
\end{definition}

Let $f$ be a flow of a signed graph $(G,\sigma)$.
The {\em support} of $f$, denoted by $\supp(f)$, is the set of edges $e$ with $f(e)\neq 0$.
A flow $f$ is {\em nowhere-zero} if $\supp(f) = E(G)$.
For convenience, we respectively shorten the notations of nowhere-zero $k$-flows into
integer-valued $k$-NZFs and modulo $k$-NZFs.

To verify Bouchet's $6$-flow conjecture~\cite{Bouchet1983} for $6$-edge-connected signed graphs,
Xu and Zhang~\cite{Xu2005} proved the following two results, which generalize Tutte's theorem to signed graph with $k=2,3$.

\begin{theorem}
[\cite{Xu2005}]\label{TH: Xu-Zhang-1}
If a signed graph $(G,\sigma)$ admits a modulo $2$-flow $f_1$ such that
each component of $\supp(f_1)$ contains an even number of negative edges,
then it also admits an integer-valued $2$-flow $f_2$ with $\supp(f_1)=\supp(f_2)$.
\end{theorem}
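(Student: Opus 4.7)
The plan is to work on $H := \supp(f_1)$ component by component. First I would use the modulo $2$ condition $\partial f_1(v) \equiv 0 \pmod{2}$ to observe that, since each nonzero term $f_1(e)\tau(h) \in \{\pm 1\}$ is odd, the number of half-edges at $v$ lying in $\supp(f_1)$ must be even. Hence every vertex has even degree in $H$, and each connected component $H_i$ of $H$ is an even signed subgraph with, by hypothesis, an even number $m_i$ of negative edges. It will suffice to produce a $\{\pm 1\}$-valued integer $2$-flow on each $H_i$; the union of these will extend by $0$ to the desired integer-valued $2$-flow on $(G,\sigma)$.

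On a fixed component $H_i$, the plan is to exploit an Eulerian circuit $C: v_0, e_1, v_1, \ldots, e_t, v_t = v_0$ of $H_i$. Denote by $h_j^-$ (resp.~$h_j^+$) the half-edge of $e_j$ incident with $v_{j-1}$ (resp.~$v_j$), so $\tau(h_j^-)\tau(h_j^+) = -\sigma(e_j)$. I would then define a sign sequence by $\epsilon_0 := 1$ and $\epsilon_j := \sigma(e_j)\epsilon_{j-1}$ for $j = 1, \ldots, t$, and set $f_2(e_j) := \epsilon_{j-1}\tau(h_j^-) \in \{\pm 1\}$. A short computation gives $f_2(e_j)\tau(h_j^-) = \epsilon_{j-1}$ and $f_2(e_j)\tau(h_j^+) = \epsilon_{j-1}\tau(h_j^-)\tau(h_j^+) = -\sigma(e_j)\epsilon_{j-1} = -\epsilon_j$.

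To verify $\partial f_2(v) = 0$ at every $v \in V(H_i)$, I would partition the half-edges incident with $v$ according to the successive visits of $C$ at $v$. For each $k \in \{1, \ldots, t-1\}$ the internal pair $(h_k^+, h_{k+1}^-)$, which lies at $v_k$, contributes $-\epsilon_k + \epsilon_k = 0$, while the wrap-around pair $(h_t^+, h_1^-)$ at $v_0 = v_t$ contributes $-\epsilon_t + \epsilon_0$. The main obstacle, and the only place where the parity hypothesis enters, is forcing $\epsilon_t = \epsilon_0$ so that this last pair also vanishes. Unwinding the recursion yields $\epsilon_t = \epsilon_0 \prod_{j=1}^{t} \sigma(e_j) = (-1)^{m_i}\epsilon_0$, which equals $\epsilon_0$ precisely because $m_i$ is even; this is exactly the hypothesis that each component of $\supp(f_1)$ contains an even number of negative edges. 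Once this closes up on each component, assembling the per-component flows into a single $f_2$ on $H$ (and setting $f_2 \equiv 0$ outside $H$) gives the required integer-valued $2$-flow with $\supp(f_2) = \supp(f_1)$.
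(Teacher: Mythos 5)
The paper states Theorem~\ref{TH: Xu-Zhang-1} only as a quoted result of Xu and Zhang and supplies no proof of it, so there is no internal argument to compare against; judged on its own, your proof is correct and is essentially the standard Euler-tour argument for this fact. All the key steps check out: the modulo-$2$ condition forces every vertex to have even degree (half-edge count) in $\supp(f_1)$, so each component admits an Eulerian circuit; your assignment $f_2(e_j)=\epsilon_{j-1}\tau(h_j^-)$ makes each consecutive pair of half-edges at a vertex contribute $-\epsilon_k+\epsilon_k=0$; and the only surviving term, $\epsilon_0-\epsilon_t=\epsilon_0\bigl(1-\prod_{j=1}^{t}\sigma(e_j)\bigr)$, vanishes precisely because the component contains an even number of negative edges. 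The example of a single negative loop (a valid modulo $2$-flow whose support admits no integer-valued $2$-flow) shows the parity hypothesis cannot be dropped, and your argument correctly isolates the wrap-around pair as the one place that hypothesis enters.
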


\begin{theorem}
[\cite{Xu2005}]\label{TH: Xu-Zhang-2}
If a signed graph $(G,\sigma)$ admits a modulo $3$-flow $f_1$
such that $\supp(f_1)$ is bridgeless,
then it also admits an integer-valued $3$-flow $f_2$
with $\supp(f_1)=\supp(f_2)$.
\end{theorem}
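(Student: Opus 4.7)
The plan is to reduce the theorem to an edge-subset selection problem on $H=\supp(f_1)$ and then solve that problem by iterative augmentation using the bridgeless hypothesis.

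First I would normalize $f_1$. Since $\pm 2\equiv\mp 1\pmod 3$, replacing each value $f_1(e)\in\{\pm 2\}$ by $\mp 1$ preserves both the support and the modulo-$3$-flow property, so I may assume $f_1:E(G)\to\{-1,0,1\}$. The flow $f_2$ we seek must vanish off $\supp(f_1)$, so I restrict attention to the bridgeless subgraph $H=\supp(f_1)$, on which $f_1$ takes values in $\{\pm 1\}$. Write $\partial f_1(v)=3g(v)$ for the induced defect function $g:V(H)\to\mathbb Z$.

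Next I would look for $f_2$ of the ansatz $f_2(e)=-2f_1(e)$ for $e\in S$ and $f_2(e)=f_1(e)$ otherwise, for some $S\subseteq E(H)$ to be chosen. Since $f_1(e)\in\{\pm 1\}$, automatically $f_2(e)\in\{\pm 1,\pm 2\}$ and $\supp(f_2)=\supp(f_1)$, so $f_2$ is a valid nowhere-zero candidate $3$-flow. A short computation gives $\partial f_2(v)=3g(v)-3\,\partial(f_1|_S)(v)$, so the flow equation $\partial f_2\equiv 0$ becomes the combinatorial condition
\[
\partial(f_1|_S)(v)=g(v)\qquad\text{for every }v\in V(H).
\]

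The main step is to produce such an $S$. I would proceed by iterative augmentation, starting from $S=\emptyset$ and measuring progress by the total defect $\Delta(S)=\sum_v|g(v)-\partial(f_1|_S)(v)|$. If $\Delta(S)>0$, pick a vertex $u$ with $\alpha(u):=g(u)-\partial(f_1|_S)(u)\neq 0$; a double-counting using the identity $\sum_v\partial f(v)=2\sum_{e\in E^-}f(e)\tau(h_1)$ forces some other vertex $w$ to carry a compensating defect. Bridgelessness of $H$ then supplies a walk $W$ from $u$ to $w$; toggling the membership of each edge of $W$ in $S$ strictly decreases $\Delta(S)$. Iteration terminates in finitely many steps at $\Delta(S)=0$.

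The main technical obstacle is that toggling is more delicate in the bidirected setting than in the ordinary-graph case: a negative edge contributes the \emph{same} sign to $\partial$ at both of its endpoints, so toggling along $W$ does not telescope cleanly through a negative edge and moreover alters $\sum_v\alpha(v)$ by an even amount. I would handle this by decomposing each augmentation into two kinds of moves, an ordinary-type augmentation along a positive-edge walk and a parity-correcting move along a closed walk through an odd number of negative edges (which is available by bridgelessness whenever $H$ is unbalanced; when $H$ is balanced one switches to remove all negative edges and the problem reduces to Tutte's classical $3$-flow theorem for ordinary graphs). Showing that these two types of moves always suffice — and that any persistent blockage would contradict bridgelessness via a deletion/contraction or max-flow-min-cut argument — is where the bulk of the combinatorial work of the proof lies.
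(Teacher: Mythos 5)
First, note that the paper does not prove this statement: Theorem~\ref{TH: Xu-Zhang-2} is quoted from Xu and Zhang~\cite{Xu2005} and used as a black box (in Claim~\ref{CL: nonempty}), so there is no in-paper proof to compare against; your proposal has to stand on its own.

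Your reduction is correct and is the natural one: normalizing $f_1$ to values in $\{0,\pm1\}$, restricting to $H=\supp(f_1)$, writing $\partial f_1=3g$, and observing that the representatives of $f_1$ modulo $3$ with values in $\{\pm1,\pm2\}$ are exactly the functions $f_1-3(f_1|_S)$, so that the theorem becomes the existence of $S\subseteq E(H)$ with $\partial(f_1|_S)=g$. The gap is that everything after this reformulation is asserted rather than proved, and the assertion that is proved is wrong. Your first augmentation step claims that a double count ``forces some other vertex $w$ to carry a compensating defect''; this is false for signed graphs, since $\sum_v\alpha(v)=\sum_v g(v)-\sum_v\partial(f_1|_S)(v)$ is twice a sum over negative edges and need not vanish --- one can perfectly well have a single vertex $u$ with $\alpha(u)=2$ and $\alpha\equiv 0$ elsewhere, in which case no positive-walk augmentation is available. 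You acknowledge this in the ``obstacle'' paragraph and propose a second move along a closed walk through an odd number of negative edges, but you then state explicitly that showing these two moves always suffice ``is where the bulk of the combinatorial work of the proof lies.'' That remaining work is not a technicality: it is the entire content of the theorem. Concretely, to decrease $\Delta(S)$ you need a walk (open from $u$ to $w$, or unbalanced and closed through $u$) whose edges can be toggled so that every internal vertex keeps its defect --- this requires the walk to be compatible with the signs $f_1(e)$ and the orientation $\tau$, not merely to exist in the underlying bridgeless graph. Proving that a blocked configuration forces a bridge (or a small cut) in $H$ is a genuine min-cut/residual-network argument complicated by the negative edges, and none of it appears. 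You also need to rule out the process stalling at $\Delta(S)=2$ when $H$ is unbalanced but the defective vertex cannot reach an unbalanced circuit along compatibly oriented edges.

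Two smaller points. The balanced case is handled correctly (switch to an all-positive signature and invoke Tutte), and that is a legitimate reduction. But your ansatz also silently commits to finding $f_2\equiv f_1\pmod 3$; this is what the known proofs produce, so it is not a flaw, but it is an additional choice whose feasibility is part of what must be established, not a free normalization. As it stands the proposal is a plausible proof \emph{plan} whose hardest step --- the one the bridgeless hypothesis is actually for --- is missing.
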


In this paper, under the weaker conditions, we prove the following two results
which are analogs of Theorem~\ref{TH: Bouchet} and 
respectively improve Theorems~\ref{TH: Xu-Zhang-1} and~\ref{TH: Xu-Zhang-2}.

\begin{theorem}
\label{TH: 2-to-3}
If a signed graph $(G,\sigma)$ is connected and admits a modulo $2$-flow $f_1$
such that $\supp(f_1)$ contains an even number of negative edges,
then it also admits an integer-valued $3$-flow $f_2$
with $\supp(f_1)=\{e\in E(G)\colon f_2(e)=\pm 1\}$.
\end{theorem}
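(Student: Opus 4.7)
The strategy is to apply Theorem~\ref{TH: Xu-Zhang-1} separately on each connected component of $H := \supp(f_1)$, and to route the resulting local obstructions through a system of auxiliary trails carrying flow value $\pm 2$. Because $f_1$ is a modulo $2$-flow, every vertex has even degree in $H$, so the components $H_1, \dots, H_k$ of $H$ are connected Eulerian signed subgraphs. Let $q_i$ be the number of negative edges of $H_i$; the parity of $q_i$ is a switching invariant because $H_i$ is Eulerian. Call $H_i$ \emph{bad} when $q_i$ is odd. The hypothesis that $\sum_i q_i$ is even forces an even number of bad components, say $2m$.

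To pair up the bad components by trails lying in $E(G) \setminus E(H)$, I would contract each $H_i$ in $G$ to a single vertex $x_i$, obtaining a connected graph $G'$ with edge set $E(G) \setminus E(H)$. Let $T = \{x_i : H_i \text{ is bad}\}$. Since $|T| = 2m$ is even and $G'$ is connected, a $T$-join exists in $G'$ and decomposes into $m$ pairwise edge-disjoint trails, each joining two vertices of $T$. Lifted back to $G$, these become $m$ edge-disjoint trails $P_1, \dots, P_m$ in $E(G) \setminus E(H)$, pairing up the bad components.

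I would then define $f_2$ in three kinds of pieces. Along each trail $P_j$, I would propagate sign choices to assign $\pm 2$ to consecutive edges so that the two path-edges meeting at every interior visit of the trail cancel; the resulting boundary contributions vanish at every vertex except the two endpoints of $P_j$, where they equal $\pm 2$. On each good $H_i$, Theorem~\ref{TH: Xu-Zhang-1} applied to the signed subgraph $(H_i, \sigma|_{H_i})$ with input modulo $2$-flow $f_1|_{H_i}$ (whose support is connected with an even number of negative edges) yields a nowhere-zero integer $2$-flow $g_i$, necessarily $\pm 1$-valued; set $f_2|_{H_i} = g_i$. For each bad $H_i$, let $v_i$ be the unique vertex of $H_i$ that is an endpoint of some $P_j$; adjoin a negative loop $\ell_i$ at $v_i$ to form $\tilde H_i$, which is now Eulerian with $q_i + 1$ (hence an even number of) negative edges, and apply Theorem~\ref{TH: Xu-Zhang-1} to $\tilde H_i$ with the all-ones modulo $2$-flow to obtain a $\pm 1$-valued integer flow; its restriction to $E(H_i)$ has boundary $0$ away from $v_i$ and $\pm 2$ at $v_i$, precisely because the loop contribution has been stripped off.

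Checking that $f_2$ is a flow reduces to checking each vertex: at vertices outside $V(H)$ every contribution is an interior trail visit and cancels; at vertices of some $H_i$ that are neither $v_i$ nor endpoints of any trail every piece contributes $0$; and at each trail endpoint $v_i$ the $\pm 2$ coming from the bad-component piece must cancel the $\pm 2$ coming from the trail. The main obstacle is this sign-matching. Fortunately, for each bad $H_i$ the imbalance at $v_i$ can be reversed by negating the $\tilde H_i$-flow, and since $v_i$ is the single designated endpoint associated with $H_i$, the flip can be performed independently for each component and made to meet the trail's sign on either side. The resulting $f_2$ takes values $\pm 1$ precisely on $H = \supp(f_1)$ and values in $\{0, \pm 2\}$ elsewhere, yielding the desired integer-valued $3$-flow.
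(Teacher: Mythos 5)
Your local gadget for a bad component (attach a negative loop at the attachment vertex, apply Theorem~\ref{TH: Xu-Zhang-1} to the augmented component, then strip the loop to leave a boundary of $\pm 2$ there) is exactly the device the paper uses. But your global routing step has a genuine gap. A path in the contracted graph $G'$ does not lift to a trail of $G$: when the path passes through a contracted vertex $x_i$ as an \emph{interior} vertex, the two consecutive edges are, back in $G$, incident to two possibly distinct vertices $u,w$ of $H_i$. Your $\pm 2$ assignment then deposits a boundary of $\pm 2$ at $u$ and another $\pm 2$ at $w$, and nothing in your construction cancels these: the flow you place on $H_i$ via Theorem~\ref{TH: Xu-Zhang-1} is $\pm 1$-valued with zero boundary everywhere, and in a signed graph you cannot repair a $\pm 2$ imbalance at two prescribed vertices by negating that flow along a path inside $H_i$, because sign-reversal along a path need not preserve the flow condition at the path's interior vertices --- this is precisely the modulo-versus-integer obstruction the theorem is about. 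A concrete failure: three components $H_1,H_2,H_3$ joined in a path by two bridges, with $H_1,H_3$ bad and $H_2$ good; the unique $T$-join uses both bridges and visits $x_2$ internally. Even the natural patch --- attaching a negative loop at each of $u$ and $w$ and applying Theorem~\ref{TH: Xu-Zhang-1} to that augmented $H_2$ --- does not obviously work, because the relative sign of the two loops' boundary contributions is dictated by whichever flow the theorem returns, and you only control a single global negation of it, not independent flips at $u$ and at $w$.

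The paper avoids this entirely by induction on $|E(G)|$: in a minimal counterexample every edge outside $\supp(f_1)$ is a bridge, each side of such a bridge contains an odd number of negative support edges, and one recurses on the two sides of a single bridge after attaching one negative loop to each. Each recursive piece has exactly one external attachment point, which is exactly what sidesteps the simultaneous sign-control problem above. To salvage your argument you would need either a strengthening of Theorem~\ref{TH: Xu-Zhang-1} that controls the boundary signs at several prescribed vertices at once, or a restructuring of the construction into an induction of the paper's type.
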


\begin{theorem}
\label{TH: 3-to-4}
If a signed graph $(G,\sigma)$ is bridgeless and admits a modulo $3$-flow $f_1$,
then it also admits an integer-valued $4$-flow $f_2$ with
$\supp(f_1)\subseteq\{e\in E(G)\colon f_2(e)=\pm 1,\pm 2\}$.
\end{theorem}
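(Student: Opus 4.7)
The plan is to construct an integer $4$-flow $f_2$ satisfying $f_2(e)\equiv f_1(e)\pmod 3$ for every $e\in\supp(f_1)$. This automatically yields the conclusion because the only $4$-flow values (elements of $\{0,\pm1,\pm2,\pm3\}$) that are nonzero modulo $3$ are $\pm1$ and $\pm2$, while $f_1(e)\not\equiv 0\pmod 3$ on $\supp(f_1)$. Fixing the integer representative $f_1(e)\in\{-1,0,1\}$ and writing $\partial f_1(v)=3\psi(v)$ for some $\psi\colon V(G)\to\mathbb Z$, the problem is equivalent to finding an integer-valued correction $g\colon E(G)\to\mathbb Z$ with $\partial g(v)=-3\psi(v)$ at every vertex, $g(e)\in 3\mathbb Z$ on $\supp(f_1)$, and $|f_1(e)+g(e)|\le 3$ on every edge; then $f_2:=f_1+g$ is the desired flow.

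My strategy is to reduce to Theorem~\ref{TH: Xu-Zhang-2}. I would first promote $f_1$ to a modulo $3$-flow $\tilde f_1$ of $(G,\sigma)$ whose support contains $\supp(f_1)$ and is bridgeless as a subgraph. Start with $\tilde f_1:=f_1$; while some $e\in\supp(\tilde f_1)$ is a bridge of $\supp(\tilde f_1)$, I would use the bridgeless hypothesis on $G$ to locate a signed-circuit structure $C$ (either a balanced cycle or a barbell formed from two unbalanced cycles joined by a path) through $e$ that meets $E(G)\setminus\supp(\tilde f_1)$, route a modulo $3$-flow $\delta_C$ on $C$, and update $\tilde f_1:=\tilde f_1+\delta_C$. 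The orientation of $\delta_C$ is chosen so that $\delta_C(e)\equiv\tilde f_1(e)\pmod 3$, which forces $\tilde f_1(e)+\delta_C(e)\equiv 2\tilde f_1(e)\not\equiv 0\pmod 3$ on $e$; a parallel choice at the other edges of $C\cap\supp(\tilde f_1)$ avoids cancellation there. Since each iteration strictly grows the support, the process terminates with some $\tilde f_1$ whose support is bridgeless. Theorem~\ref{TH: Xu-Zhang-2} then yields an integer $3$-flow $f_2^{\ast}$ with $\supp(f_2^{\ast})=\supp(\tilde f_1)\supseteq\supp(f_1)$ and $f_2^{\ast}(e)\in\{\pm1,\pm2\}$ there; extending by $0$ off $\supp(\tilde f_1)$ produces the required $4$-flow $f_2$.

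The main obstacle will be the promotion step. In a signed graph, only balanced cycles carry a single-cycle modulo $3$-flow, whereas an unbalanced cycle requires a barbell partner; hence finding an appropriate $C$ through a bridge $e$ of $\supp(\tilde f_1)$ requires a case analysis based on the parities of negative edges in candidate circuits of $G$. Moreover, when the chosen $C$ is forced to revisit $\supp(\tilde f_1)$ at several edges, achieving simultaneous mod-$3$ non-cancellation across all those edges with a single orientation of $\delta_C$ may fail, and a fallback will be needed—either decomposing $\delta_C$ into smaller circuit flows added one at a time, or switching to a minimum-counterexample induction on $|E(G)\setminus\supp(f_1)|$ in which the $2$-edge-cut case (where $e\in E(G)\setminus\supp(f_1)$ lies in a cut $\{e,e'\}$; here the modulo $3$-flow condition across the cut forces $e'\in E(G)\setminus\supp(f_1)$ as well) is handled by deleting or contracting the cut and reducing. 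Organizing these signed-graph-specific obstructions so that the promotion loop terminates is the principal technical burden.
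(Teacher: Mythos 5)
Your plan hinges entirely on the ``promotion'' step --- upgrading $f_1$ to a modulo $3$-flow $\tilde f_1$ with $\supp(\tilde f_1)\supseteq\supp(f_1)$ and $\supp(\tilde f_1)$ bridgeless --- and that step is asserted, not proved; as you yourself note, it is ``the principal technical burden.'' But it is not merely a burden: it is the whole theorem, and there is a concrete reason to doubt it can be carried out. If the promotion succeeded, Theorem~\ref{TH: Xu-Zhang-2} would hand you an integer-valued $3$-flow whose support contains $\supp(f_1)$, which is strictly stronger than the statement being proved (an integer-valued \emph{4}-flow, with values $\pm 3$ permitted on $E(G)-\supp(f_1)$). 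The paper's own construction genuinely produces the value $\pm 3$ on an edge $e^{*}\notin\supp(f_1)$: at a degree-$3$ vertex $x$ the two support edges can be forced to contribute boundary $\pm 3$ (one value $\pm1$, one value $\pm2$), and then $f_2(e^{*})=\mp 3$ is unavoidable for that extension. An argument that proves the $3$-flow version should therefore be treated as suspect until the promotion lemma is actually established. The specific failure modes you list are real: an unbalanced cycle through a bridge $e$ of $\supp(\tilde f_1)$ carries no circuit flow by itself; a barbell partner need not exist in the right position; and a single scaling $\pm\delta_C$ (or $\pm 2\delta_C$) gives only two or four choices to avoid cancellation simultaneously at every edge of $C\cap\supp(\tilde f_1)$, which can fail once $C$ revisits the support more than once. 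Note also that supports of modulo $3$-flows of signed graphs are \emph{not} unions of signed circuits (the graph of Fig.~\ref{FIG: no-3-flow} has a nowhere-zero modulo $3$-flow whose support contains bridges), so restricting $\delta_C$ to balanced cycles and barbells is both too weak to realize all modulo $3$-flows and not obviously sufficient to kill every bridge.

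The paper avoids all of this by a different induction. It takes a counterexample minimizing first $|\suppc(f_1)|$ and then $\sum_v|d_G(v)-3|$; uses Fleischner's Splitting Lemma (Lemma~\ref{LE: splitting}) to reduce to maximum degree $3$ while preserving bridgelessness and the modulo $3$-flow; picks a vertex $x$ meeting both $\supp(f_1)$ and $\suppc(f_1)$, contracts an edge $e^{*}\in E_G(x)\cap\suppc(f_1)$, and applies induction to get $f_2'$. The key closing step is that $\partial f_2'(x)\equiv 0\pmod 3$ --- otherwise assigning $e^{*}$ the value $-\partial f_2'(x)$ would produce a modulo $3$-flow with strictly larger support, contradicting minimality of $|\suppc(f_1)|$ --- and since $d_G(x)\le 3$ with at most two support edges of absolute value at most $2$ at $x$, one gets $|\partial f_2'(x)|\le 4$, hence $\partial f_2'(x)\in\{0,\pm3\}$ and $f_2(e^{*})=-\partial f_2'(x)$ is a legal $4$-flow value. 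Your second ``fallback'' (induction on $|E(G)-\supp(f_1)|$ with a contraction step) is pointing in this direction, but without the degree reduction and the mod-$3$ boundary argument the extension value is not controlled, and as written the proposal does not close the gap.
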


\begin{figure}
\begin{center}
\begin{tikzpicture}[scale=1.2]
\path(-20:2.66) coordinate (u1);\draw [fill=black] (u1) circle (0.08cm);
\path (00:1.50) coordinate (u2);\draw [fill=black] (u2) circle (0.08cm);
\path (20:2.66) coordinate (u3);\draw [fill=black] (u3) circle (0.08cm);
\path (00:2.50) coordinate (u4);

\draw [directed, line width=0.85] (u2)--(u1);
\draw [directed, line width=0.85] (u2)--(u3);
\draw [directed, line width=0.85] (u4)--(u1);
\draw [directed, line width=0.85] (u4)--(u3);

\draw [directed, line width=0.85] plot [smooth] coordinates {(-20:2.66) (0:3.2) (20:2.66)};
\draw [directed, line width=0.85] plot [smooth] coordinates {(20:2.66) (0:3.2) (-20:2.66)};
\path (200:2.66) coordinate (v1);\draw [fill=black] (v1) circle (0.08cm);
\path (180:1.50) coordinate (v2);\draw [fill=black] (v2) circle (0.08cm);
\path (160:2.66) coordinate (v3);\draw [fill=black] (v3) circle (0.08cm);
\path (180:2.50) coordinate (v4);

\draw [directed, line width=0.85] (v2)--(v1);
\draw [directed, line width=0.85] (v2)--(v3);
\draw [directed, line width=0.85] (v4)--(v1);
\draw [directed, line width=0.85] (v4)--(v3);

\draw [directed, line width=0.85] plot [smooth] coordinates {(200:2.66) (180:3.2) (160:2.66)};
\draw [directed, line width=0.85] plot [smooth] coordinates {(160:2.66) (180:3.2) (200:2.66)};
\path (250:2.66) coordinate (w1);\draw [fill=black] (w1) circle (0.08cm);
\path (270:1.50) coordinate (w2);\draw [fill=black] (w2) circle (0.08cm);
\path (290:2.66) coordinate (w3);\draw [fill=black] (w3) circle (0.08cm);
\path (270:2.50) coordinate (w4);

\draw [directed, line width=0.85] (w2)--(w1);
\draw [directed, line width=0.85] (w2)--(w3);
\draw [directed, line width=0.85] (w4)--(w1);
\draw [directed, line width=0.85] (w4)--(w3);

\draw [directed, line width=0.85] plot [smooth] coordinates {(250:2.66) (270:3.2) (290:2.66)};
\draw [directed, line width=0.85] plot [smooth] coordinates {(290:2.66) (270:3.2) (250:2.66)};
\path (0:0) coordinate (o);\draw [fill=black] (o) circle (0.08cm);
\draw [directed, line width=0.85] (u2)--(o);
\draw [directed, line width=0.85] (v2)--(o);
\draw [directed, line width=0.85] (w2)--(o);
\end{tikzpicture}
\end{center}
\caption{\small$(G,\sigma)$ admits a modulo $3$-NZF with all edges assigned with $1$, but no integer-valued $3$-NZF.}
\label{FIG: no-3-flow}
\end{figure}
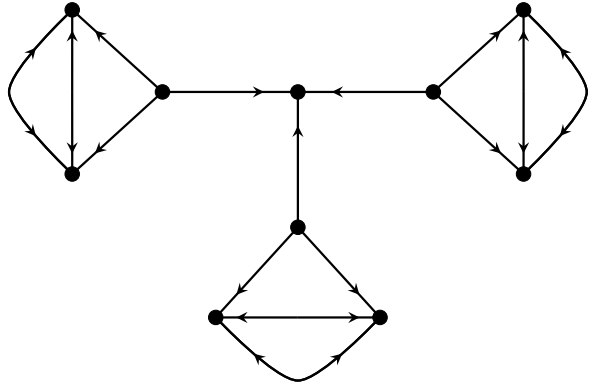

\subsection{Integer-valued circular flows in signed graphs}

\begin{definition}
Let $(G,\sigma)$ be a signed graph associated with an orientation $\tau$.
\begin{enumerate}
\item
Let $k$ and $d$ be two positive integers.
An {\em integer-valued (resp.~modulo) circular $\frac{k}{d}$-flow} of $(G,\sigma)$
is an integer-valued (resp.~modulo) flow $f$
such that $d\leq |f(e)|\leq k-d$ for every edge $e\in E(G)$.
\item
Let $p$ be a positive integer.
The orientation $\tau$ is a {\em modulo $(2p+1)$-orientation} if
$\sum_{e\in H(v)}\tau(e)\equiv0\pmod{2p+1}$ for every vertex $v\in V(G)$.
\end{enumerate}
\end{definition}

When $k=3$, Tutte's theorem~\cite{Tutte1949} implies that
a graph $G$ admits a modulo circular $3$-flow if and only if it admits an integer-valued circular $3$-flow.
This result was generalized to integer-valued circular $(2+\frac{1}{p})$-flows by Jaeger~\cite{Jaeger1981} as follows.

\begin{theorem}
[\cite{Jaeger1981}]\label{TH: Jaeger}
Let $G$ be a graph. Then the following statements are equivalent:
\begin{enumerate}
\item [\rm\bf(A)]
$G$ admits a modulo $(2p+1)$-orientation.
\item [\rm\bf(B)]
$G$ admits a modulo circular $(2+\frac{1}{p})$-flow.
\item [\rm\bf(C)]
$G$ admits an integer-valued circular $(2+\frac{1}{p})$-flow.
\end{enumerate}
\end{theorem}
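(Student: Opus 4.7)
The plan is to close the cycle $(A)\Leftrightarrow(B)$ directly, observe that $(C)\Rightarrow(B)$ is trivial, and then establish the main implication $(A)\Rightarrow(C)$ by appealing to Tutte's classical modulo-to-integer theorem for ordinary graphs. The thread tying everything together is the congruence $p\equiv-(p+1)\pmod{2p+1}$: an edge weight of $p$ in one direction is modularly indistinguishable from a weight of $p+1$ in the opposite direction, so we can freely trade orientations for magnitudes.

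For $(A)\Rightarrow(B)$, given a modulo $(2p+1)$-orientation $\tau$, I set $f(e)=p$ in direction $\tau$. Then $\partial f(v)=p\sum_{h\in H(v)}\tau(h)\equiv 0\pmod{2p+1}$, while $|f(e)|=p\in[p,p+1]$, so $f$ is a modulo circular $(2+1/p)$-flow. For $(B)\Rightarrow(A)$, given such an $f$, I define $\tau'$ to agree with the sign of $f$ on edges where $|f(e)|=p$ and to disagree on edges where $|f(e)|=p+1$. Under $\tau'$ every edge carries the value $p$ or $-(p+1)$, both $\equiv p\pmod{2p+1}$. Hence the constant mapping $g\equiv 1$ in direction $\tau'$ satisfies $pg\equiv f\pmod{2p+1}$ edgewise, so $p\,\partial g(v)\equiv\partial f(v)\equiv 0\pmod{2p+1}$. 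Since $\gcd(p,2p+1)=1$, this forces $\partial g(v)\equiv 0\pmod{2p+1}$, meaning $\tau'$ is a modulo $(2p+1)$-orientation.

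Since every integer-valued circular flow is a fortiori a modulo flow, $(C)\Rightarrow(B)$ is immediate. The heart of the theorem is $(A)\Rightarrow(C)$. I would apply Tutte's theorem in its constructive form to the constant modulo flow $g(e)\equiv p$ in direction $\tau$ supplied by (A). This yields an integer-valued $(2p+1)$-flow $\tilde f$ with $\tilde f(e)\equiv p\pmod{2p+1}$ and $|\tilde f(e)|\le 2p$ for every edge $e$. The only integers in $[-2p,2p]$ congruent to $p$ modulo $2p+1$ are $p$ and $p-(2p+1)=-(p+1)$, so $\tilde f(e)\in\{p,-(p+1)\}$ and $p\le|\tilde f(e)|\le p+1$; this is exactly an integer-valued circular $(2+1/p)$-flow.

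The main obstacle will be the strengthened form of Tutte's theorem I just used: I need not merely some integer $(2p+1)$-NZF, but one whose value at every edge lies in the prescribed residue class mod $2p+1$. The standard way to obtain it is to lift $g$ arbitrarily to an integer function on $E(G)$, correct it to an integer circulation by subtracting $(2p+1)h$ for a suitable integer flow $h$ (whose existence reduces to the fact that the excesses $\partial(\text{lift})(v)/(2p+1)$ sum to $0$), and then push multiples of $2p+1$ around cycles to shrink each coordinate below $2p+1$. If $G$ has a bridge, summing $d^+(v)-d^-(v)$ over one side of the bridge produces $\pm 1\not\equiv 0\pmod{2p+1}$, so (A) already fails and the equivalence holds vacuously; hence there is no loss in assuming $G$ bridgeless when invoking Tutte's theorem.
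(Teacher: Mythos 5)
This is Theorem~\ref{TH: Jaeger}, which the paper quotes from Jaeger without proof, so there is no in-paper argument to compare yours against; judged on its own, your proposal is correct and is essentially Jaeger's original derivation. The implications $(A)\Leftrightarrow(B)$ via the constant value $p$ and the congruence $p\equiv-(p+1)\pmod{2p+1}$, and $(C)\Rightarrow(B)$ trivially, are all fine, and the key observation in $(A)\Rightarrow(C)$ --- that an integer flow congruent to $p$ on every edge with $|\tilde f(e)|\le 2p$ is forced into $\{p,-(p+1)\}$, hence is circular $(2+\frac1p)$ --- is exactly the right reduction to Tutte's modulo-to-integer theorem in its residue-preserving form. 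The one thin spot is your sketch of that strengthened Tutte theorem: ``push multiples of $2p+1$ around cycles to shrink each coordinate'' is not obviously terminating as stated, since a cycle adjustment that shrinks one coordinate can push another out of range. The standard rigorous version instead takes, among all integer lifts with the prescribed residues and all values in $[-(2p),2p]$, one minimizing $\sum_{v}|\partial\tilde f(v)|$, and uses $\sum_v\partial\tilde f(v)=0$ (true for ordinary graphs, and precisely what fails for signed graphs) to find a source--sink path along which an adjustment by $\pm(2p+1)$ strictly decreases the total discrepancy. With that substitution your argument is complete; your closing remark that a bridge already kills $(A)$ is also correct and means no bridgeless hypothesis is needed.
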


For signed graphs, using an identical proof in~\cite{Jaeger1981},
one can easily prove that {\bf(A)} and {\bf(B)} are still equivalent.
However, similar to the argument for modulo flows, the equivalence relation between {\bf(B)} and {\bf(C)}
does not hold for signed graphs (see Fig.~\ref{FIG: no-3-flow}).
For more details, readers are referred to~\cite{Kaiser2014},~\cite{Macajova2014},~\cite{Raspaud2011},
\cite{Schubert2015},~\cite{Xu2005},~\cite{Zhu2015}, etc.

\medskip
The following are some early results proved by Xu and Zhang~\cite{Xu2005},
Schubert and Steffen~\cite{Schubert2015}, and Zhu~\cite{Zhu2015}.

\begin{theorem}
\label{TH: previous-results}
Let $(G,\sigma)$ be a signed graph. Then {\bf(B)} and {\bf(C)} are equivalent if
\begin{enumerate}

\item {\rm(\cite{Xu2005})}
$p=1$, and, $(G,\sigma)$ is cubic and contains a perfect matching;

\item {\rm(\cite{Schubert2015})}
$(G,\sigma)$ is $(2p+1)$-regular and contains an $p$-factor;

\item {\rm(\cite{Zhu2015})}
$(G,\sigma)$ is $(12p-1)$-edge-connected with negativeness even or at least $(2p+1)$.
\end{enumerate}
\end{theorem}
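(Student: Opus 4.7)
The direction (C) $\Rightarrow$ (B) is immediate from the definitions, so only (B) $\Rightarrow$ (C) is at stake. The paper already notes that (A) $\Leftrightarrow$ (B) carries over to signed graphs by Jaeger's original argument, so in each case I would start from a modulo $(2p+1)$-orientation $\tau$ of $(G,\sigma)$ and try to construct an integer-valued flow $f$ with $|f(e)|\in\{p,p+1\}$ on every edge; such an $f$ is by definition an integer-valued circular $(2+\tfrac{1}{p})$-flow.

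\textbf{Parts (1) and (2).}
In a $(2p+1)$-regular graph the orientation $\tau$ is extremely rigid. At every vertex $v$, the quantity $s_v:=\sum_{h\in H(v)}\tau(h)$ is a sum of $2p+1$ signs, hence an odd integer in $[-(2p+1),2p+1]$; the mod-$(2p+1)$ condition then forces $s_v\in\{\pm(2p+1)\}$. So every vertex is either a full \emph{source} (all $\tau(h)=+1$ on $H(v)$) or a full \emph{sink} (all $\tau(h)=-1$). Now take the hypothesized $p$-factor $F$ and set $f(e)=p+1$ for $e\in F$ and $f(e)=-p$ for $e\notin F$. Since each vertex meets exactly $p$ edges of $F$ (contributing $p+1$ apiece) and $p+1$ edges of $E(G)\setminus F$ (contributing $-p$ apiece), a direct computation gives $\partial f(v)=0$ at every source and, with the opposite global sign, the same value at every sink. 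As $|f(e)|\in\{p,p+1\}$, this is the desired integer-valued circular $(2+\tfrac{1}{p})$-flow. Part (1) is then the specialization $p=1$ with $F$ a perfect matching.

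\textbf{Part (3) and main obstacle.}
In the $(12p-1)$-edge-connected case regularity is gone and no global $p$-factor is available. Following Zhu's strategy I would exploit the high odd-edge-connectivity to extract rich spanning substructures (via Nash--Williams--Tutte-type packing theorems) and iteratively cancel the residues $\partial f(v)\pmod{2p+1}$ by rerouting along cycles chosen inside those substructures, without ever pushing some $|f(e)|$ outside $\{p,p+1\}$. The parity/size condition on the negativeness $q$ is exactly what is needed to make these corrections compatible with the signature: either the negative edges can be globally paired (when $q$ is even) or enough of them are locally available (when $q\ge 2p+1$) to absorb any parity obstruction caused by traversing negative edges. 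The central difficulty — and the main obstacle of the whole theorem — is precisely this last point: in a signed graph, crossing a negative edge reverses the effective orientation, so a naive cycle shift can destroy integrality of $\partial f$ or push some $|f(e)|$ out of $\{p,p+1\}$. This is why both the large edge-connectivity budget $12p-1$ and the negativeness hypothesis must be spent, and it is also why parts (1) and (2) are clean by comparison: regularity collapses $\tau$ to a source/sink bipartition, after which a $p$-factor converts modulo into integer flow in a single global stroke.
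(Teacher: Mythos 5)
First, a framing point: the paper offers no proof of Theorem~\ref{TH: previous-results} at all --- it is a list of three cited results, and the paper's actual contribution is Theorem~\ref{TH: mod-circular}, which subsumes all three. Measured against that, your treatment of parts (1) and (2) is correct and complete. The key observation --- that at a vertex of degree $2p+1$ the sum $\sum_{h\in H(v)}\tau(h)$ is an odd integer in $[-(2p+1),2p+1]$, so the congruence forces it to be $\pm(2p+1)$ and every vertex to be a full source or full sink --- is exactly right, and the assignment $p+1$ on the $p$-factor and $-p$ off it then closes the boundary at every vertex. This is essentially the Schubert--Steffen argument, and it is also the regular special case of the paper's proof of Theorem~\ref{TH: mod-circular}, which assigns $-p-1$ on a spanning subgraph $F$ with $d_F(v)=p\mu(v)$ and $p$ elsewhere.

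Part (3) is where the genuine gap lies: what you have written is a statement of intent, not a proof. The whole difficulty in the non-regular case is to manufacture the substitute for the $p$-factor that parts (1) and (2) hand you for free, and your outline (``extract rich spanning substructures,'' ``iteratively cancel the residues by rerouting along cycles,'' ``absorb parity obstructions with negative edges'') names this difficulty without supplying any mechanism that resolves it; no step of the sketch can be checked. The concrete missing ideas are the ones the paper develops for Theorem~\ref{TH: mod-circular}: (i) repeatedly split off, at each vertex, a pair of half-edges with opposite $\tau$-signs while preserving odd-$(2p+1)$-edge-connectivity (Theorem~\ref{TH: splitting} and Corollary~\ref{CORO: splitting}), until every vertex is a full source or sink and hence has degree a multiple of $2p+1$; and (ii) extract from the resulting graph a spanning subgraph $F$ with $d_F(v)=p\mu(v)$ by combining a $\{1,2\}$-factor obtained from Tutte's $f$-factor theorem (Lemma~\ref{LE: 1-2-factor}) with Petersen's $2$-factorization (Lemma~\ref{LE: mu-p-factor}). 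Once $F$ exists, the flow is written down exactly as in your parts (1)--(2). Note also that this route shows the hypotheses of part (3) are far stronger than needed for the (B)$\Rightarrow$(C) conversion: neither the $(12p-1)$-edge-connectivity nor the negativeness condition plays any role beyond guaranteeing odd-$(2p+1)$-edge-connectivity, so building a proof around ``spending'' the negativeness budget, as your sketch proposes, aims at the wrong target.
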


In this paper, we improve all the results in Theorem~\ref{TH: previous-results} as follows.

\begin{theorem}
\label{TH: mod-circular}
{\bf(B)} and {\bf(C)} are equivalent for signed graphs with odd-edge-connectivity at least $(2p+1)$.
That is, if a signed graph $(G,\sigma)$ is odd-$(2p+1)$-connected,
then it admits a modulo circular $(2+\frac{1}{p})$-flow
if and only if it admits an integer-valued circular $(2+\frac{1}{p})$-flow.
\end{theorem}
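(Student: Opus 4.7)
First, I would invoke the equivalence $\mathbf{(A)}\Leftrightarrow\mathbf{(B)}$, already known to hold for signed graphs, to replace the hypothesis that $(G,\sigma)$ admits a modulo circular $(2+\frac{1}{p})$-flow by the equivalent existence of a modulo $(2p+1)$-orientation $\tau$ of $(G,\sigma)$. With $\tau$ fixed, set $f_{0}(e)=p$ for every $e\in E(G)$; then $\sigma_{\tau}(v):=\sum_{h\in H(v)}\tau(h)\equiv 0\pmod{2p+1}$, so we may write $\sigma_{\tau}(v)=(2p+1)\,b(v)$ for a well-defined function $b\colon V(G)\to\mathbb{Z}$, and $\partial f_{0}(v)=p(2p+1)\,b(v)$.

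Next, I would reduce the theorem to an edge-selection problem. Any integer-valued circular $(2+\frac{1}{p})$-flow $f$ in the same modulo class as $f_{0}$ must take the value $p$ or $-(p+1)$ on each edge (the two integer values in $\{\pm p,\pm(p+1)\}$ congruent to $p\pmod{2p+1}$). Letting $S=\{e\in E(G):f(e)=-(p+1)\}$, a direct computation shows that $\partial f\equiv 0$ if and only if
\[
\sum_{h\in H(v),\,e_{h}\in S}\tau(h) \;=\; p\,b(v) \qquad \text{for every } v\in V(G).
\]
It therefore suffices to find, after possibly replacing $\tau$ by another modulo $(2p+1)$-orientation (by reversing certain edges in a way that preserves the signature constraint $\tau(h_{1})\tau(h_{2})=-\sigma(e)$), a subset $S\subseteq E(G)$ satisfying this boundary identity. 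The natural approach is induction on $|E(G)|$, at each step routing demand from a vertex $v$ with $b(v)\neq 0$ to a vertex $u$ with $b(u)$ of opposite sign along a $\tau$-compatible path in $(G,\sigma)$ and toggling membership of its edges in $S$, or else contracting a $\tau$-balanced substructure to produce a smaller signed graph that inherits the connectivity hypothesis.

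Verifying that such routing always exists reduces to standard max-flow/min-cut considerations: for every vertex set $X\subseteq V(G)$, the demand $\sum_{v\in X}p\,b(v)$ must be accommodated within the edge cut $E(X,V(G)\setminus X)$ under the available $\tau$-signs, subject to the parity constraint $\sum_{v\in X}b(v)\equiv e(X,V(G)\setminus X)\pmod{2}$ inherited from $\sum_{v\in X}\sigma_{\tau}(v)$. The main obstacle I expect is to show that any infeasibility of this system forces the existence of an odd edge cut of size strictly less than $2p+1$, contradicting the odd-$(2p+1)$-edge-connectivity hypothesis. In signed graphs the signature interacts with cut parities and orientations in a nontrivial manner --- both half-edges of a negative edge share the same $\tau$-sign, so negative edges internal to $X$ contribute $\pm 2$ rather than $0$ to $\sum_{v\in X}\sigma_{\tau}(v)$ --- and carefully unpacking these interactions to recover precisely the bound $2p+1$, rather than a weaker bound, is the technical heart of the argument and what distinguishes this theorem from the earlier results in Theorem~\ref{TH: previous-results} that assumed strictly higher edge-connectivity.
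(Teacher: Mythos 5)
Your setup is correct and matches the paper's: reduce via $\mathbf{(A)}\Leftrightarrow\mathbf{(B)}$ to a modulo $(2p+1)$-orientation $\tau$, and observe that it suffices to find $S\subseteq E(G)$ with $\sum_{h\in H(v),\,e_h\in S}\tau(h)=p\,b(v)$ at every vertex, the flow being $p$ off $S$ and $-(p+1)$ on $S$. But the existence of such an $S$ is the entire content of the theorem, and your proposal does not establish it: the ``routing demand along $\tau$-compatible paths / max-flow--min-cut'' paragraph is a plan, not an argument, and you yourself flag its key step (that infeasibility forces an odd cut of size less than $2p+1$) as an unproven expectation. There is also a specific misstep: you permit yourself to ``replace $\tau$ by another modulo $(2p+1)$-orientation by reversing certain edges,'' but reversing an edge changes $\sum_{h\in H(v)}\tau(h)$ by $\pm 2$ at each endpoint, so this generally destroys the modulo $(2p+1)$ property and is not a free move. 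Finally, toggling edges of a path in and out of $S$ does not behave like augmenting a flow in a signed graph, precisely because both half-edges of a negative edge carry the same $\tau$-sign; this is why a naive augmenting-path scheme does not obviously close.

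The paper resolves the existence of $S$ by a structural route you are missing. First, a new splitting theorem (Theorem~\ref{TH: splitting}, preserving odd-$(2p+1)$-edge-connectivity) is applied at every vertex having half-edges of both $\tau$-signs, splitting off one positive and one negative half-edge and suppressing the degree-$2$ vertex, until every vertex has all its half-edges with the same sign. This converts your signed half-edge-sum condition into a pure degree condition: the resulting graph $G'$ has $d_{G'}(v)=(2p+1)\mu(v)$ and one needs a spanning subgraph $F$ with $d_F(v)=p\,\mu(v)$. That subgraph is then produced by further splitting down to degrees $2p+1$ and $2(2p+1)$ (Corollary~\ref{CORO: splitting}), extracting a $\{1,2\}$-factor via Tutte's $f$-factor theorem (Lemma~\ref{LE: 1-2-factor}, where odd-$(2p+1)$-edge-connectivity enters by forcing each relevant cut $e(U,S)$ to be odd and hence of size at least $2p+1$), and applying Petersen's $2$-factorization to the complement. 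If you want to salvage your approach, the cut-counting you would need to do for the min-cut certificate is essentially the verification of Tutte's $f$-factor inequality carried out in Lemma~\ref{LE: 1-2-factor}; without either that computation or an equivalent one, the proof is incomplete.
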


\section{Proof of Theorem~\ref{TH: 2-to-3}}
\label{SEC: 2-to-3}

Let $(G,\sigma)$ together with a flow $f_{1}$ be a counterexample to Theorem~\ref{TH: 2-to-3} such that $|E(G)|$ is minimized.
In the following context, we are to yield a contradiction by showing that $(G,\sigma)$ actually 
admits an integer-valued $3$-flow $f_{2}$ satisfying Theorem~\ref{TH: 2-to-3}. 
For convenience, denote $B =\supp(f_{1})$.

\begin{claim}
\label{CL: cut-edge}
$B\neq E(G)$ and each edge of $E(G)-B$ is a bridge.
\end{claim}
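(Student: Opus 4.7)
The plan is to treat the two parts of the claim separately, each time deriving a contradiction to the minimality (or counterexample status) of $(G,\sigma,f_1)$.

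For the first part, I would assume $B = E(G)$ and apply Theorem~\ref{TH: Xu-Zhang-1} directly. Since $G$ is connected, $\supp(f_1)$ has a single component, and by hypothesis it contains an even number of negative edges; thus Theorem~\ref{TH: Xu-Zhang-1} yields an integer-valued $2$-flow $f_2$ with $\supp(f_2) = E(G) = B$. Every edge then receives $f_2$-value $\pm 1$, so $f_2$ is also an integer-valued $3$-flow with $\{e : f_2(e) = \pm 1\} = \supp(f_1)$, contradicting the assumption that $(G,\sigma,f_1)$ is a counterexample to Theorem~\ref{TH: 2-to-3}.

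For the second part, I would suppose toward contradiction that some $e \in E(G) - B$ is not a bridge, and delete it. Set $G' := G - e$; this is still connected and has fewer edges than $G$. Since $f_1(e) = 0$, the restriction $f_1' := f_1|_{E(G')}$ is a modulo $2$-flow of $(G',\sigma|_{E(G')})$ whose support is still $B$, which retains the even parity of negative edges. By the minimality of $|E(G)|$, $(G',\sigma|_{E(G')},f_1')$ is not a counterexample to Theorem~\ref{TH: 2-to-3}, so it admits an integer-valued $3$-flow $f_2'$ with $\{e' \in E(G') : f_2'(e') = \pm 1\} = B$. Extending $f_2'$ to $G$ by setting $f_2(e) := 0$ preserves conservation at every vertex and gives an integer-valued $3$-flow of $(G,\sigma)$ with $\{e' : f_2(e') = \pm 1\} = B = \supp(f_1)$, again contradicting the counterexample hypothesis.

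The argument is essentially mechanical, and I do not expect any real obstacle; the only point worth checking is that deleting an edge on which $f_1$ vanishes preserves both the modulo-$2$-flow property and the parity of negative edges in the support, which is immediate. The claim should therefore follow at once from a single invocation of Theorem~\ref{TH: Xu-Zhang-1} in the first part and from minimality of the chosen counterexample in the second.
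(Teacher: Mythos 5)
Your proof is correct and follows essentially the same route as the paper: apply Theorem~\ref{TH: Xu-Zhang-1} when $B=E(G)$ (noting every edge then carries value $\pm1$), and otherwise delete a non-bridge edge of $E(G)-B$ and invoke minimality. The only difference is that you spell out the verification of the hypotheses (connectedness of the support, preservation of parity) slightly more explicitly than the paper does.
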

\begin{proof}
If $B=E(G)$, then $G$ is an eulerian graph containing an even number of negative edges.
By Theorem~\ref{TH: Xu-Zhang-1}, $G$ admits an integer-valued $2$-NZF $f_{2}$.
If $e^{*}\in E(G)-B$ is not a bridge, let $G' = G-\{e^{*}\}$.
Then $G'$ is connected and $f_{1}$ is a modulo $2$-flow of $G'$ with $|E(G')|<|E(G)|$.
Thus by the minimality of $(G,\sigma)$, $(G',\sigma)$ admits an integer-valued $3$-flow $f_{2}$
with $B =\{e\in E(G')\colon f_{2}(e)=\pm1\}$.
In both cases, $f_{2}$ is a desired integer-valued $3$-flow.
\end{proof}

\begin{claim}
\label{CL: odd-odd}
For an edge $e\in E(G)-B$, denote the components of $G-\{e\}$ by $Q_{1}$ and $Q_{2}$.
Then each $B\cap Q_{i}$ contains an odd number of negative edges.
\end{claim}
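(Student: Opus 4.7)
\medskip\noindent\textbf{Proof proposal.}
The plan is to combine a parity count with the minimality of the counterexample. First I would invoke Claim~\ref{CL: cut-edge} to conclude that the edge $e\in E(G)-B$ is a bridge, and observe that $f_1(e)=0$ since $e\notin B=\supp(f_1)$. Thus $G-\{e\}$ has exactly two connected components $Q_1,Q_2$, and $B$ decomposes as the disjoint union $(B\cap Q_1)\cup(B\cap Q_2)$. Letting $n_i$ denote the number of negative edges in $B\cap Q_i$, the hypothesis of Theorem~\ref{TH: 2-to-3} forces $n_1+n_2$ to be even, so $n_1\equiv n_2\pmod 2$. It therefore suffices to rule out the case in which both $n_1$ and $n_2$ are even.

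Assume for contradiction that both are even. Since $f_1(e)=0$, the restriction $f_1|_{E(Q_i)}$ would be a modulo $2$-flow of the connected signed graph $(Q_i,\sigma|_{E(Q_i)})$ whose support $B\cap Q_i$ contains an even number of negative edges, so the hypotheses of Theorem~\ref{TH: 2-to-3} would be satisfied on each $Q_i$. Because $|E(Q_i)|<|E(G)|$, the minimality of $(G,\sigma)$ would then produce an integer-valued $3$-flow $f_2^{(i)}$ of $(Q_i,\sigma|_{E(Q_i)})$ with $\{e'\in E(Q_i)\colon f_2^{(i)}(e')=\pm 1\}=B\cap Q_i$.

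To finish, I would assemble these into an integer-valued $3$-flow on $G$ by setting $f_2|_{E(Q_i)}=f_2^{(i)}$ and $f_2(e)=0$. Since $e$ is a bridge carrying $0$, the boundary of $f_2$ at each vertex coincides with that of the relevant $f_2^{(i)}$ and so vanishes, making $f_2$ an integer-valued $3$-flow of $(G,\sigma)$ with $\{e'\in E(G)\colon f_2(e')=\pm 1\}=B$. This would contradict the choice of $(G,\sigma)$ as a counterexample, forcing both $n_1$ and $n_2$ to be odd. I do not anticipate a real obstacle here: once the bridge structure from Claim~\ref{CL: cut-edge} is in hand, the only point that needs care is confirming that the parity condition of Theorem~\ref{TH: 2-to-3} is inherited by each component $Q_i$ in the hypothetical ``both even'' case, which is precisely the step that licenses the minimality reduction.
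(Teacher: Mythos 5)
Your proposal is correct and follows essentially the same route as the paper: use the evenness of the number of negative edges in $B$ to reduce to the ``both even'' case, apply the minimality of the counterexample to each component $Q_i$, and glue the resulting $3$-flows with $f_2(e)=0$ to reach a contradiction. The only difference is that you spell out more explicitly why each $(Q_i,\sigma|_{E(Q_i)})$ satisfies the hypotheses of Theorem~\ref{TH: 2-to-3}, which the paper leaves implicit.
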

\begin{proof}
Since $B$ contains an even number of negative edges,
$B\cap Q_{1}$ and $B\cap Q_{2}$ contain the same parity number of negative edges.
Suppose to the contrary that each contains an even number of negative edges.
For $i\in\{1,2\}$, we have $|E(Q_{i})| < |E(G)|$ and therefore
$(Q_{i},\sigma)$ admits an integer-valued $3$-flow $g_{i}$
such that $B\cap Q_{i} =\{e\in E(Q_{i})\colon g_{i}(e)=\pm1\}$.
We define $f_2$ as $f_2(e') = g_i(e')$ for each $e' \in Q_i$ and $f_2(e) = 0$.
It is easy to see that $f_{2}$ is a desired integer-valued $3$-flow.
\end{proof}

Now we first choose an edge $e^{*}$ in $E(G)-B$ and denote its ends by $x_{1}$ and $x_{2}$, respectively.
For each $i\in\{1,2\}$, let $Q_{i}$ be the component of $G-\{e^{*}\}$ with $x_i\in V(Q_{i})$.
We construct a new signed graph $(H_{i},\sigma_{i})$ from $Q_{i}$ by adding a negative loop $e_{i}$ at $x_{i}$.
Denote $B_{i} = (B\cap Q_i)\cup\{e_{i}\}$ and assign $f_{1}(e_{i}) = 1$.
By Claim~\ref{CL: odd-odd}, each $B_{i}$ contains an even number of negative edges.
Therefore, $f_1$ is a modulo $2$-flow of $(H_i,\sigma_{i})$ with support $B_{i}$.
Since $|E(H_{i})| < |E(G)|$, by the minimality of $G$, 
$(H_i,\sigma_{i})$ admits an integer-valued $3$-flow $g_{i}$ such that $B_{i} =\{e\in E(H_{i})\colon g_{i}(e)=\pm1\}$.
Note that $|\partial g_{i}(x_{i})|=2$ in $Q_{i}$.
Without loss of generality, we can assume that $\partial g_{2}(x_{2}) = -\sigma(e^{*})\partial g_{1}(x_{1})$
otherwise we can replace $g_{1}$ by $-g_{1}$.
Finally, we define $f_{2}$ by assigning $f_{2}(e) = g_{i}(e)$ for each $e\in E(Q_{i})$, 
and by choosing $f_{2}(e^{*}) = 2$ or $-2$ such that the boundaries of $f_{2}$ at $x_{1}$ and $x_{2}$ are both zero.
It is easy to verify that $f_{2}$ is a desired integer-valued $3$-flow.
\rule{3mm}{3mm}\par\medskip

\section{Proof of Theorem~\ref{TH: 3-to-4}}
\label{subsec: splitting}

First let us recall the vertex-splitting operation and Splitting Lemma.

\begin{definition}
Let $G$ be a graph and $v$ be a vertex.
If $F\subset E_{G}(v)$, we denote by $G_{(v;F)}$ the graph obtained from $G$ by splitting the edges of $F$ away from $v$.
That is, adding a new vertex $v^{*}$ and changing the common end of edges in $F$ from $v$ to $v^{*}$
(see Fig.~\ref{FIG: vertex splitting}).
\end{definition}

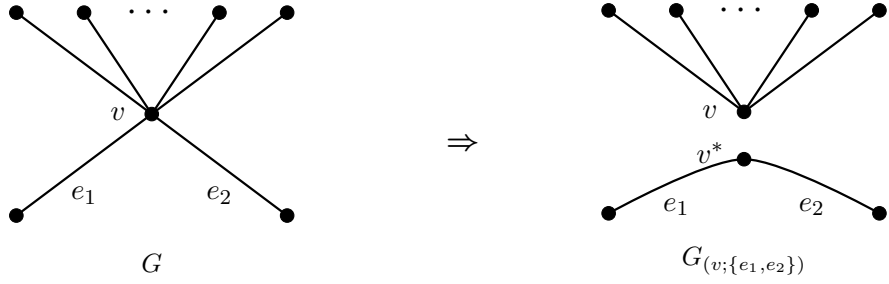
\begin{figure}
\begin{center}
\begin{minipage}[c]{0.48\textwidth}
\begin{center}
\begin{tikzpicture}[scale=0.9]
\path(-2,1.5) coordinate (u1);\draw [fill=black] (u1) circle (0.1cm);
\path(-1,1.5) coordinate (u2);\draw [fill=black] (u2) circle (0.1cm);
\path(1,1.5) coordinate (u3);\draw [fill=black] (u3) circle (0.1cm);
\path(2,1.5) coordinate (u4);\draw [fill=black] (u4) circle (0.1cm);
\path(0,0) coordinate (o);\draw [fill=black] (o) circle (0.1cm);
\path(-2,-1.5) coordinate (v1);\draw [fill=black] (v1) circle (0.1cm);
\path(2,-1.5) coordinate (v2);\draw [fill=black] (v2) circle (0.1cm);

\draw [line width=0.85] (u1)--(o)--(u2);
\draw [line width=0.85] (u3)--(o)--(u4);
\draw [line width=0.85] (v1)--(o)--(v2);
\node at (-0.5,0){$v$};
\node at (-1,-1.2){$e_1$};
\node at (1,-1.2){$e_2$};
\node at (0,1.5){\Large $\cdots$};
\node at (0,-2.2){\small $G$};
\end{tikzpicture}
\end{center}
\end{minipage}
\begin{minipage}[c]{0.02\textwidth}
\begin{center}
\begin{tikzpicture}[scale=0.9]
\node at (0,0){$\bm\Rightarrow$};
\end{tikzpicture}
\end{center}
\end{minipage}
\begin{minipage}[c]{0.48\textwidth}
\begin{center}
\begin{tikzpicture}[scale=0.9]
\path(-2,1.5) coordinate (u1);\draw [fill=black] (u1) circle (0.1cm);
\path(-1,1.5) coordinate (u2);\draw [fill=black] (u2) circle (0.1cm);
\path(1,1.5) coordinate (u3);\draw [fill=black] (u3) circle (0.1cm);
\path(2,1.5) coordinate (u4);\draw [fill=black] (u4) circle (0.1cm);
\path(0,0) coordinate (o);\draw [fill=black] (o) circle (0.1cm);
\path(0,-0.7) coordinate (o*);\draw [fill=black] (o*) circle (0.1cm);
\path(-2,-1.5) coordinate (v1);\draw [fill=black] (v1) circle (0.1cm);
\path(2,-1.5) coordinate (v2);\draw [fill=black] (v2) circle (0.1cm);

\draw [line width=0.85] (u1)--(o)--(u2);
\draw [line width=0.85] (u3)--(o)--(u4);
\draw [line width=0.85] plot [smooth] coordinates {(v1) (o*) (v2)};

\node at (-0.5,0){$v$};
\node at (-0.5,-0.6){$v^*$};
\node at (-1,-1.4){$e_1$};
\node at (1,-1.4){$e_2$};
\node at (0,1.5){\Large $\cdots$};
\node at (0,-2.2){\small $G_{(v;\{e_1,e_2\})}$};
\end{tikzpicture}
\end{center}
\end{minipage}
\end{center}
\caption{\small Splitting $\{e_1, e_2\}$ away from $v$}
\label{FIG: vertex splitting}
\end{figure}

\begin{lemma}
[Splitting Lemma~\cite{Fleischner1976,Fleischner1990}]
\label{LE: splitting}
Let $G$ be a bridgeless graph and $v$ be a vertex.
If $d_{G}(v)\geq 4$ and $e_{1}, e_{2},e_{3}\in E_{G}(v)$ are chosen in a way that $e_{1}$ and $e_{3}$
are in different blocks when $v$ is a cut-vertex,
then either $G_{(v;\{e_{1}, e_{2}\})}$ or $G_{(v;\{e_{1}, e_{3}\})}$ is bridgeless.
Furthermore, $G_{(v;\{e_{1}, e_{3}\})}$ is bridgeless if $v$ is a cut-vertex.
\end{lemma}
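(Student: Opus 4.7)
The plan is to argue by contradiction: assume that both $G_{(v;\{e_1,e_2\})}$ and $G_{(v;\{e_1,e_3\})}$ contain a bridge, and derive a contradiction with either the bridgelessness of $G$ or the block-structure hypothesis at $v$.

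First, I would establish a translation step: in a bridgeless graph $G$, if $b$ is a bridge of $G_{(v;F)}$ for $F=\{e,e'\}\subset E_G(v)$, then $\{b, e, e'\}$ is a $3$-edge-cut of $G$ with $v$ on one side, call it $S$, and the ``far'' ends of $e$ and $e'$ on the opposite side $\bar S$. The bipartition of $V(G_{(v;F)})-b$ must place $v$ and $v^*$ on opposite sides; otherwise, merging $v$ with $v^*$ would show $G-b$ disconnected in $G$, contradicting bridgelessness. A short sub-case check rules out $b\in F$: in that scenario the induced cut in $G$ would collapse to a single edge, a bridge of $G$. Hence $b\notin F$, and the same cut-counting forces both ``far'' endpoints of $e$ and $e'$ to lie in $\bar S$, producing the claimed $3$-edge-cut.

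Applying this to both splittings yields $3$-edge-cuts $C_1=\{b_1, e_1, e_2\}$ and $C_2=\{b_2, e_1, e_3\}$ of $G$, with bipartitions $(S_1,\bar S_1)$ and $(S_2,\bar S_2)$ satisfying $v\in S_1\cap S_2$. Denote the endpoints of $e_1, e_2, e_3$ other than $v$ by $u_1, u_2, u_3$. The translation step forces $u_1\in\bar S_1\cap\bar S_2$. Moreover, since $e_2\notin C_2$ and $v\in S_2$, we deduce $u_2\in \bar S_1\cap S_2$; symmetrically $u_3\in S_1\cap\bar S_2$. Using that edge cuts of $G$ form an $\mathbb F_2$-vector space, $C_1\triangle C_2$ is an edge cut corresponding to the partition $(S_1\triangle S_2, V\setminus(S_1\triangle S_2))$. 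Combining this with the positional information for $b_1, b_2$ (each pinned by its membership in exactly one of $C_1, C_2$) and the refined four-part partition $\{S_1^{\varepsilon_1}\cap S_2^{\varepsilon_2}\}_{\varepsilon_i\in\{+,-\}}$, one extracts a sub-cut of $G$ of size at most $1$, contradicting bridgelessness. The degenerate subcases, such as $b_1=b_2$ or $b_1=e_3$, are handled analogously by direct inspection of the refined partition.

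For the ``furthermore'' clause, assume $v$ is a cut-vertex of $G$ with $e_1\in E(B)$ and $e_3\in E(B')$ for distinct blocks $B, B'$ of $G$ at $v$, and suppose toward contradiction that $G_{(v;\{e_1, e_3\})}$ has a bridge $b$. The translation step yields a $3$-edge-cut $\{b, e_1, e_3\}$ of $G$ with $v\in S$ and $u_1, u_3\in\bar S$. Restrict this cut to $B$: since $u_1\in V(B)\setminus\{v\}$ lies in $\bar S$ and $v$ lies in $S$, the sub-cut $[S\cap V(B), \bar S\cap V(B)]$ of $B$ is non-empty and contains $e_1$. If $b\notin E(B)$, this sub-cut equals $\{e_1\}$, making $e_1$ a bridge of $B$---impossible, because every block of a bridgeless graph is $2$-edge-connected (or a single edge that would itself be a bridge of $G$). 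Hence $b\in E(B)$; by the symmetric argument applied to $B'$, also $b\in E(B')$. But $B$ and $B'$ are edge-disjoint, a contradiction.

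The main obstacle will be the second step. A naive count gives only $|C_1\triangle C_2|\le 4$, which by itself does not contradict bridgelessness; one really needs the refined four-part partition of $V(G)$ together with the specific locations of $u_1, u_2, u_3$ around $v$ in order to identify a proper sub-cut of size at most $1$. Carefully enumerating coincidences among $b_1, b_2, e_2, e_3$, and tracking in which cell of the refinement each endpoint of $b_i$ lies, is where the argument is most delicate.
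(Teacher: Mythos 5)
The paper does not prove this lemma; it is quoted verbatim from Fleischner's work with a citation, so there is no in-paper argument to compare against. Judging your attempt on its own: the translation step (a bridge $b$ of $G_{(v;F)}$ with $F=\{e,e'\}$ yields the $3$-edge-cut $\{b,e,e'\}$ of $G$ with $v$ on one side and the far ends of $e,e'$ on the other, and $b\notin F$) is correct, and your proof of the ``furthermore'' clause via the two blocks is also correct and complete.

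The gap is in the main dichotomy. You aim to derive a contradiction with \emph{bridgelessness alone} by extracting a sub-cut of size at most $1$ from the refined partition $A=S_1\cap S_2$, $B=S_1\cap\bar S_2$, $C=\bar S_1\cap S_2$, $D=\bar S_1\cap\bar S_2$. That target is unreachable in one essential configuration. With $v\in A$, $u_1\in D$, $u_2\in C$, $u_3\in B$, the two $3$-cuts force $e(A,D)=1$ and $e(B,C)=0$, leaving four placements for $(b_1,b_2)$; three of them do give a cell with boundary of size $1$, but the fourth, $b_1\in E(B,D)$ and $b_2\in E(C,D)$, gives $e(A,B)=e(A,C)=e(A,D)=e(B,D)=e(C,D)=1$, $e(B,C)=0$, where every cell and every union of cells has boundary at least $2$. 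One can realize this with four bridgeless cells: the resulting $G$ is bridgeless and \emph{both} splittings really do have bridges. The lemma is saved only because in this configuration $E(A,V(G)\setminus A)=\{e_1,e_2,e_3\}\subseteq E_G(v)$ while $d_G(v)\ge4$, so $A\neq\{v\}$, hence $v$ is a cut-vertex and $u_1,u_3$ lie in a common component of $G-v$ (joined through $b_1,b_2$), i.e.\ $e_1$ and $e_3$ lie in the same block --- contradicting the hypothesis you never invoke in this half of the proof. So the missing idea is precisely that the block condition, not bridgelessness, must deliver the contradiction in the surviving case; as written, your second step would fail there. (The same repair is needed in the degenerate subcase $b_2=e_2$ with $C=\emptyset$.)
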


\medskip
\noindent{\bf Proof of Theorem~\ref{TH: 3-to-4}.}
Let $(G,\sigma)$ together with a flow $f_{1}$ be a counterexample to Theorem~\ref{TH: 3-to-4} such that
\begin{enumerate}
\item
$|\suppc(f_{1})|$ is minimized, where $\suppc(f_{1}) = E(G)-\supp(f_{1})$;
\item
subject to (1), $\sum_{v\in V(G)}|d_{G}(v)-3|$ is minimized.
\end{enumerate}

Now we use an argument similar to the one used in Section~\ref{SEC: 2-to-3}
and show that $(G,\sigma)$ actually admits an integer-valued $4$-flow
satisfying Theorem~\ref{TH: 3-to-4} in the following context.

\begin{claim}
\label{CL: nonempty}
$\supp(f_{1})\neq\emptyset$ and $\suppc(f_{1})\neq\emptyset$.
\end{claim}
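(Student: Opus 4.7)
The plan is to dispose of each half of the claim independently, both by exhibiting an integer-valued $4$-flow that contradicts the assumption that $(G,\sigma,f_1)$ is a counterexample. The crucial observation is that an integer-valued $3$-flow with the right support is automatically an integer-valued $4$-flow satisfying the containment required in Theorem~\ref{TH: 3-to-4}, so Theorem~\ref{TH: Xu-Zhang-2} is strong enough to handle the degenerate cases here.

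For $\supp(f_1)\neq\emptyset$: suppose otherwise, so $f_1\equiv 0$. Then the containment condition $\supp(f_1)\subseteq\{e\in E(G)\colon f_2(e)=\pm 1,\pm 2\}$ is vacuous, and the all-zero map $f_2\equiv 0$ is trivially an integer-valued $4$-flow of $(G,\sigma)$. This witnesses the conclusion of Theorem~\ref{TH: 3-to-4}, contradicting the choice of $(G,\sigma,f_1)$ as a counterexample.

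For $\suppc(f_1)\neq\emptyset$: suppose otherwise, so $\supp(f_1)=E(G)$. Since $(G,\sigma)$ is bridgeless by hypothesis, $\supp(f_1)$ is bridgeless, and the hypotheses of Theorem~\ref{TH: Xu-Zhang-2} are fulfilled. That theorem supplies an integer-valued $3$-flow $f_2$ with $\supp(f_2)=\supp(f_1)=E(G)$, so that $|f_2(e)|\in\{1,2\}$ for every edge $e$. Such an $f_2$ is in particular an integer-valued $4$-flow, and it satisfies $\supp(f_1)=\{e\in E(G)\colon f_2(e)=\pm 1,\pm 2\}$, again contradicting the counterexample assumption.

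There is essentially no obstacle here: the only step that might look suspicious is the reinterpretation of a $3$-flow as a $4$-flow, and this is immediate from the definition since $|f_2(e)|\leq 2<3$. All the heavier work (using the minimality of $|\suppc(f_1)|$ and of $\sum_{v}|d_G(v)-3|$, together with Lemma~\ref{LE: splitting}) is postponed to the subsequent claims, where $\supp(f_1)$ is a proper nonempty subset of $E(G)$.
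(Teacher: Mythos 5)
Your proposal is correct and follows exactly the paper's own argument: the all-zero flow handles $\supp(f_1)=\emptyset$, and Theorem~\ref{TH: Xu-Zhang-2} applied to the bridgeless graph with $\supp(f_1)=E(G)$ yields an integer-valued $3$-NZF that serves as the required $4$-flow. No differences worth noting.
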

\begin{proof}
If $\supp(f_{1}) =\emptyset$, then simply let $f_{2}(e) = 0$ for each edge $e$.
If $\suppc(f_{1}) =\emptyset$, then $\supp(f_{1}) = E(G)$ and thus $f_{1}$ itself is a modulo $3$-NZF of $(G,\sigma)$.
Since $G$ is bridgeless, Theorem~\ref{TH: Xu-Zhang-2} implies that $(G,\sigma)$ admits an integer-valued $3$-NZF $f_{2}$.
In both cases, $f_{2}$ is a desired integer-valued $4$-flow.
\end{proof}

\begin{claim}
\label{CL: max3}
The maximum degree of $G$ is at most $3$.
\end{claim}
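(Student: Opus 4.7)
The plan is to derive a contradiction by showing that if some vertex $v$ had $d_G(v)\geq 4$, then a vertex-splitting at $v$ would yield a lexicographically smaller counterexample under the ordering $(|\suppc(f_1)|,\sum_u|d_G(u)-3|)$, contradicting the extremal choice of $(G,\sigma,f_1)$.

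First, I would apply Lemma~\ref{LE: splitting} to select a pair $\{e_1,e_2\}\subseteq E_G(v)$ so that $G':=G_{(v;\{e_1,e_2\})}$ is bridgeless; if $v$ is a cut-vertex, the lemma forces $e_1$ and a third edge $e_3$ to lie in different blocks. The splitting introduces a new degree-$2$ vertex $v^*$, and $f_1$ extends to a modulo $3$-flow on $G'$ exactly when the boundary condition $f_1(e_1)\tau(h_1)+f_1(e_2)\tau(h_2)\equiv 0\pmod{3}$ is met at $v^*$, where $h_1,h_2$ are the half-edges of $e_1,e_2$ at $v^*$. To arrange this together with bridgelessness, I would examine the residues $a_i:=f_1(e_i)\tau(h_i)\bmod 3$ for $e_i\in E_G(v)$, which sum to $0\pmod{3}$ by the boundary condition at $v$; a pair $\{i,j\}$ with $a_i+a_j\equiv 0\pmod{3}$ typically exists among the splittable pairs. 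In residual cases where no splittable pair satisfies the congruence, I would perturb $f_1$ by adding a $\mathbb{Z}_3$-valued cycle flow through $v$ (using the bridgelessness of $G$ to locate a suitable cycle) to alter the residues; such a perturbation can only enlarge $\supp(f_1)$, strictly decreasing $|\suppc(f_1)|$ and immediately producing a smaller counterexample.

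Having obtained a bridgeless $(G',\sigma)$ carrying a modulo $3$-flow $f_1'$ with $\supp(f_1)\subseteq\supp(f_1')$, I would compare $\sum_u|d(u)-3|$. For $d_G(v)\geq 5$, the splitting already decreases this sum by $1$. For $d_G(v)=4$, both $v$ and $v^*$ end up with degree $2$; suppressing each (valid because the modulo $3$-flow condition at a degree-$2$ vertex forces compatibility of its two incident signed flow values, and a degree-$2$ suppression cannot create a bridge from a bridgeless graph) yields a net decrease of $1$. By the minimality of $(G,\sigma,f_1)$, the resulting smaller graph admits an integer-valued $4$-flow with the required support. Since vertex-splitting and degree-$2$ suppression are reversible operations preserving edge-flow values, this lifts back to an integer-valued $4$-flow $f_2$ on $(G,\sigma)$ with $\supp(f_1)\subseteq\{e\colon f_2(e)=\pm 1,\pm 2\}$, contradicting the counterexample hypothesis.

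The main obstacle I expect is guaranteeing that the bridgelessness constraint of Lemma~\ref{LE: splitting} and the modulo-$3$ boundary condition at $v^*$ can be met simultaneously by a single pair of edges at $v$. A direct residue-counting argument can fail (for instance at a degree-$4$ vertex with residues $(0,1,1,1)$, no pair sums to $0\pmod{3}$), so the proof likely needs a careful perturbation of $f_1$ via cycle flows together with a structural argument exploiting both the bridgelessness of $G$ and the flexibility in choosing which pair to split at $v$.
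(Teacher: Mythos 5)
Your overall induction framework (split at a high-degree vertex, reduce $\sum_u|d_G(u)-3|$, invoke minimality) matches the paper's, but the step you flag as the ``main obstacle'' is a genuine gap, and your proposed repair does not close it. You need the split pair to simultaneously preserve bridgelessness (Lemma~\ref{LE: splitting} only offers a limited menu of pairs) and make $\partial f_1(v^*)\equiv 0\pmod 3$; as you yourself note with the residue pattern $(0,1,1,1)$, no such pair need exist. Your fallback --- perturbing $f_1$ by a $\mathbb{Z}_3$-valued cycle flow through $v$ --- is not sound: adding a cycle flow can just as easily \emph{cancel} existing values (an edge with $f_1(e)=1$ receiving $+2$ along the cycle drops out of the support), so the claim that the perturbation ``can only enlarge $\supp(f_1)$'' is false; without $\supp(f_1)\subseteq\supp(f_1'')$ for the perturbed flow $f_1''$, the minimality argument no longer produces an $f_2$ covering all of $\supp(f_1)$. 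Moreover, in a signed graph a ``cycle through $v$'' with an odd number of negative edges does not carry a consistent nonzero circulation, so even locating a suitable perturbation is problematic.

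The paper sidesteps the obstacle rather than solving it: after splitting a bridgeless-preserving pair $\{e_1,e_2\}$ to create $v^*$, it observes $\partial f_1(v^*)\equiv-\partial f_1(v)\pmod 3$ and then, if this boundary is nonzero mod $3$, \emph{adds a new positive edge $vv^*$ carrying exactly the deficit} $\partial f_1(v^*)$ (and if the boundary is zero, it suppresses the resulting degree-$2$ vertices). The new edge lies in the support of the modified flow, so $|\suppc|$ does not increase, while $\sum_u|d(u)-3|$ strictly decreases in every case (including $d_G(v)=4$, where $v$ and $v^*$ both end with degree $3$ after the edge is added), and bridgelessness is preserved. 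An integer-valued $4$-flow of the reduced graph then pulls back to $G$. To salvage your route, replace the cycle-perturbation step with this ``add a balancing edge'' device.
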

\begin{proof}
Suppose that $G$ has a vertex $v$ with $d_{G}(v)\geq 4$.
Since $G$ is bridgeless, Lemma~\ref{LE: splitting} implies that
we can split a pair of edges $e_{1}, e_{2}$ from $v$ such that the resulting signed graph,
say $(G_{1},\sigma_{1})$, is still bridgeless.
In $G_{1}$, we consider $f_{1}$ as a mapping on $E(G_{1})$ and denote the common end of $e_{1}$ and $e_{2}$ by $v^{*}$.
Thus, $\partial f_{1}(v^{*})\equiv -\partial f_{1}(v)\pmod 3$.

Let $w\in\{v, v^{*}\}$.
If $\partial f_{1}(w)\equiv 0\pmod 3$ and $d_{G_{1}}(w) = 2$ with $E_{G_{1}}(w)=\{e_{w'}, e_{w''}\}$,
then we further suppress the vertex $w$ and denote the new edge by $e_{w}$ (see Fig.~\ref{FIG: G2}-(1)).
Then we can assign $e_{w}$ with value $f_{1}(e_{w'})$, signature $\sigma_{1}(e_{w'})\sigma_{1}(e_{w''})$,
and an orientation (based on its signature and value) in a way such that both ends of $e_{w}$ have zero boundary.
If $\partial f_{1}(w)\not\equiv 0\pmod 3$, then we further add a positive edge $vv^{*}$
oriented from $v$ to $v^{*}$ and assign $vv^{*}$ with value $\partial f_{1}(v^{*})$ (see Fig.~\ref{FIG: G2}-(2)).
In both cases, denote the resulting signed graph and mapping by $(G_{2},\sigma_{2})$ and $g_{1}$, respectively.

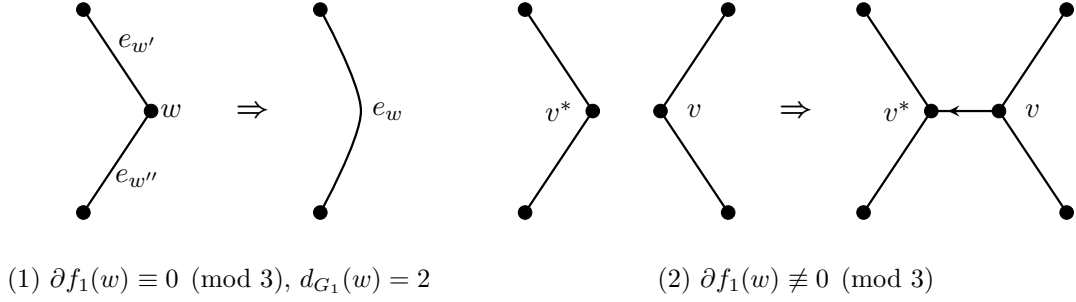
\begin{figure}
\begin{center}
\begin{minipage}[c]{0.44\textwidth}
\begin{center}
\begin{tikzpicture}[scale=0.9]
\path(-2.5,1.5) coordinate (u1);\draw [fill=black] (u1) circle (0.1cm);
\path(-2.5,-1.5) coordinate (u2);\draw [fill=black] (u2) circle (0.1cm);
\path(-1.5,0.0) coordinate (w);\draw [fill=black] (w) circle (0.1cm);
\path(0,0) coordinate (o);
\path(1,1.5) coordinate (v1);\draw [fill=black] (v1) circle (0.1cm);
\path(1,-1.5) coordinate (v2);\draw [fill=black] (v2) circle (0.1cm);
\path(1.6,0) coordinate (v3);
\draw [line width=0.85] (u1)--(w)--(u2);
\draw [line width=0.85] plot [smooth] coordinates {(v1) (v3) (v2)};
\node at (-1.2,0){$w$};
\node at (o){$\bm\Rightarrow$};
\node at (-1.7,1){$e_{w'}$};
\node at (-1.7,-1){$e_{w''}$};
\node at (2,0){$e_{w}$};
\node at (-0.5,-2.5){\small (1) $\partial f_{1}(w)\equiv 0\pmod 3$, $d_{G_{1}}(w) = 2$};
\end{tikzpicture}
\end{center}
\end{minipage}
\begin{minipage}[c]{0.55\textwidth}
\begin{center}
\begin{tikzpicture}[scale=0.9]
\path(-4, 1.5) coordinate (u1);\draw [fill=black] (u1) circle (0.1cm);
\path(-4,-1.5) coordinate (u2);\draw [fill=black] (u2) circle (0.1cm);
\path(-3,0) coordinate (u3);\draw [fill=black] (u3) circle (0.1cm);

\path(-1, 1.5) coordinate (v1);\draw [fill=black] (v1) circle (0.1cm);
\path(-1,-1.5) coordinate (v2);\draw [fill=black] (v2) circle (0.1cm);
\path(-2,0) coordinate (v3);\draw [fill=black] (v3) circle (0.1cm);

\path(1, 1.5) coordinate (y1);\draw [fill=black] (y1) circle (0.1cm);
\path(1,-1.5) coordinate (y2);\draw [fill=black] (y2) circle (0.1cm);
\path(2,0) coordinate (y3);\draw [fill=black] (y3) circle (0.1cm);

\path(4, 1.5) coordinate (x1);\draw [fill=black] (x1) circle (0.1cm);
\path(4,-1.5) coordinate (x2);\draw [fill=black] (x2) circle (0.1cm);
\path(3,0) coordinate (x3);\draw [fill=black] (x3) circle (0.1cm);

\path(0,0) coordinate (o);

\draw [line width=0.85] (u1)--(u3)--(u2);
\draw [line width=0.85] (v1)--(v3)--(v2);
\draw [line width=0.85] (x1)--(x3)--(x2);
\draw [line width=0.85] (y1)--(y3)--(y2);
\draw [directed, line width=0.85] (x3)--(y3);

\node at (-3.5,0){$v^*$};
\node at (-1.5,0){$v$};
\node at (o){$\bm\Rightarrow$};
\node at (3.5,0){$v$};
\node at (1.5,0){$v^*$};
\node at (0,-2.5){\small (2) $\partial f_{1}(w)\not\equiv 0\pmod 3$};
\end{tikzpicture}
\end{center}
\end{minipage}
\end{center}
\caption{\small Construction of signed graph $(G_{2},\sigma_{2})$}
\label{FIG: G2}
\end{figure}

It is easy to verify that $g_{1}$ is a modulo $3$-flow of $(G_{2},\sigma_{2})$
and $|\suppc(g_{1})|\leq |\suppc(f_{1})|$ and that
$\sum_{v\in V(G_{2})}|d_{G_{2}}(v)-3| <\sum_{v\in V(G)}|d_{G}(v)-3|$.
By the choice of $(G, \sigma)$, $(G_{2},\sigma_{2})$ has an integer-valued $4$-flow $g_{2}$
with $\supp(g_{1})\subseteq\{e\in E(G_{2})\colon g_{2}(e) =\pm 1,\pm 2\}$.
One can easily derive a desired integer-valued $4$-flow $f_{2}$ of $(G,\sigma)$ from $g_{2}$.
\end{proof}

Note that $G$ is connected. By Claim~\ref{CL: nonempty},
$G$ has a vertex $x$ such that $E_{G}(x)\cap\supp(f_{1})\neq\emptyset$ and $E_{G}(x)\cap\suppc(f_{1})\neq\emptyset$.
Let $e^{*}$ be an edge of $E_{G}(x)\cap\suppc(f_{1})$ and denote the other end of $e$ by $y$.
We may without lose of generality assume that $e^{*}$ is positive otherwise we make a switch at $x$.
We may further assume that $e^{*}$ is oriented from $x$ to $y$.
Now we contract $e^{*}$ and denote the resulting signed graph by $(G',\sigma')$.
Thus, the restriction of $f_{1}$ to $E(G')$, say $f_{1}'$, is a modulo $3$-flow of $(G',\sigma')$.
It follows from $\supp(f_{1}') =\supp(f_{1})$ that $|\suppc(f'_{1})| < |\suppc(f_{1})|$.
Hence, $(G',\sigma')$ admits an integer-valued $4$-flow $f'_{2}$
such that $\supp(f_{1}')\subseteq\{e\in E(G')\colon f'_{2}(e) =\pm1,\pm2\}$.

Now we consider the mapping $f'_{2}$ on $E(G)$.
Each vertex (possibly except $x$ and $y$) has zero boundary and $\partial f'_{2}(x) = -\partial f'_{2}(y)$.
If $\partial f'_{2}(x)\not\equiv 0\pmod 3$,
then we extend $f'_{2}$ to a mapping $h_{1}$ by assigning $h_{1}(e^{*})=-\partial f'_{2}(x)$.
Thus, $h_{1}$ is a modulo $3$-flow of $G$ with $\supp(h_{1})\supset\supp(f_{1})$.
This implies $|\suppc(h_{1})|<|\suppc(f_{1})|$, which contradicts the assumption~(1).
Thus, $\partial f'_{2}(x)\equiv 0\pmod 3$.
In summary, $x$ is a vertex satisfying $d_G(x)\leq3$, $E_{G}(x)\cap\suppc(f_{1})\neq\emptyset$, and
$1\leq |f'_{2}(e)|\leq 2$ for $e\in E_{G}(x)\cap\supp(f_{1})$.
Hence, $0\leq |\partial f'_{2}(x) |\leq 4$ and furthermore $|\partial f'_{2}(x)|\in\{0,3\}$.
Finally, we extend $f'_{2}$ to a mapping $f_{2}$ by assigning $f_{2}(e^{*}) = -\partial f'_{2}(x)$.
Clearly, $f_{2}$ is an integer-valued $4$-flow satisfying Theorem~\ref{TH: 3-to-4}.
\rule{3mm}{3mm}\par\medskip

\section{Proof of Theorem~\ref{TH: mod-circular}}

\subsection{A new vertex splitting lemma}

The vertex splitting method is one of the most useful techniques in graph theory
(especially, in the study of integer-valued flow and cycle cover problems).
In Section~\ref{subsec: splitting}, we have discussed Splitting Lemma
introduced by Fleischner (see Lemma~\ref{LE: splitting}).
Here are more early results about vertex splitting
by Nash-Williams~\cite{Nash-Williams1985}, Mader~\cite{Mader1978}, and Zhang~\cite{Zhang2002}.

\begin{theorem}
[\cite{Nash-Williams1985}]\label{TH: Nash-William}
Let $k$ be an even integer and $G$ be a $\lambda$-edge-connected graph.
Let $v\in V (G)$ and $a$ be an integer such that $\lambda\leq a$ and $\lambda\leq d(v)-a$.
Then there is an edge subset $F\subset E(v)$ such that $|F| = a$ and $G_{(v;F)}$ remains $\lambda$-edge-connected.
\end{theorem}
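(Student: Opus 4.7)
The plan is to reduce this theorem to Mader's classical edge-lifting theorem, which asserts that in a bridgeless graph $H$ and at a vertex $v$ with $d_H(v)\geq 4$, for any edge $e_1\in E_H(v)$ there exists $e_2\in E_H(v)\setminus\{e_1\}$ such that the \emph{lifting} of $\{e_1,e_2\}$ (deleting both and adding a single edge between their non-$v$ endpoints) preserves the edge-connectivity between every pair of vertices in $V(H)\setminus\{v\}$. Since the hypothesis forces $\lambda\geq 2$, $G$ is bridgeless and Mader's theorem applies at $v$.

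First I would introduce an auxiliary graph $G^+$ obtained from $G$ by adjoining a new vertex $v^*$ joined to $v$ by $a$ parallel edges. A direct cut check shows $G^+$ is $\lambda$-edge-connected: any cut not separating $v$ from $v^*$ restricts to a cut of $G$ of size $\geq\lambda$, while any cut separating them contains all $a$ of the parallel $vv^*$-edges and so has size $\geq a\geq\lambda$. Here $d_{G^+}(v)=d_G(v)+a$ and $d_{G^+}(v^*)=a$, and the bounds $\lambda\leq a$ and $\lambda\leq d_G(v)-a$ will persist throughout the liftings done next.

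I would then iteratively apply Mader's theorem at $v$ in $G^+$: at each step choose $e_1$ to be a still-surviving parallel $vv^*$-edge and arrange the admissible partner $e_2$ to be one of the original edges $vw$ of $G$. Such a lift deletes $e_1$ and $e_2$ and adds the edge $v^*w$, which is exactly the effect of moving the original edge $vw$ off $v$ and onto $v^*$. After $a$ such liftings the $a$ parallel edges are exhausted, $v$ has degree $d_G(v)-a\geq\lambda$, $v^*$ has degree $a\geq\lambda$, and the resulting graph is precisely $G_{(v;F)}$ where $F$ is the set of the $a$ transferred original edges. Because each individual lift preserved $\lambda$-edge-connectivity between every pair of vertices avoiding $v$, and both $v$ and $v^*$ end with degree at least $\lambda$, the final graph $G_{(v;F)}$ is $\lambda$-edge-connected.

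The principal obstacle is ensuring that Mader's theorem can be invoked at each step with a \emph{mixed} pair (one parallel edge and one original edge) rather than a parallel-parallel pair, since a parallel-parallel lift would only create a loop at $v^*$ and transfer no original edge. To handle this I would either invoke Nash-Williams' stronger complete-splitting theorem, which produces a single admissible pairing of all of $E_{G^+}(v)$ into $\tfrac{d_G(v)+a}{2}$ pairs whose simultaneous lifts preserve $\lambda$-edge-connectivity, and then swap any parallel-parallel pair against an original-original pair to obtain two mixed pairs; or argue directly by submodularity of the cut function, showing that if every admissible partner for a surviving parallel $e_1$ were also parallel, an uncrossing of the tight sets blocking original partners would contradict $\lambda\leq d_G(v)-a$. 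The evenness hypothesis in the theorem is precisely what makes $d_{G^+}(v)$ even, so that these pairings exhaust all edges at $v$ cleanly.
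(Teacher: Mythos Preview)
The paper does not prove this statement at all: Theorem~\ref{TH: Nash-William} is quoted from \cite{Nash-Williams1985} as background, alongside the results of Mader and Zhang, and is not used in the rest of the argument (the paper's own contribution in this subsection is Theorem~\ref{TH: splitting} and Corollary~\ref{CORO: splitting}, whose proofs rely on \cite{Zhang2002} and \cite{Sziget2008}). So there is nothing in the paper to compare your proposal against.

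As for the proposal itself, the strategy of grafting a vertex $v^*$ with $a$ parallel edges and then lifting is a reasonable heuristic, but as written it has real gaps. First, in your auxiliary graph $G^+$ the vertex $v$ \emph{is} a cut-vertex (deleting it isolates $v^*$), so the version of Mader's theorem stated in the paper does not apply; you would need the Lov\'asz/Mader formulation for vertices of even degree, and nothing in the hypotheses forces $d_{G^+}(v)=d_G(v)+a$ to be even --- the evenness assumption is on $\lambda$ (the ``$k$'' in the statement is a typo for $\lambda$), not on $d_G(v)$ or $a$, so your final sentence is incorrect. Second, your fallback of invoking ``Nash-Williams' stronger complete-splitting theorem'' to force mixed pairs is circular: that theorem is essentially what is being asserted here. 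If you want an honest reduction you must stay with Mader/Lov\'asz and carry out the submodularity/uncrossing argument you only gesture at; that argument can be made to work, but it is the actual content of the proof, not a detail to be deferred.
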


\begin{theorem}
[\cite{Mader1978}]
Let $G$ be a graph and $v\in V (G)$ such that $v$ is not a cut-vertex.
If $d(v)\geq4$ and $v$ is adjacent to at least two distinct vertices,
then there are two edges $e_{1},e_{2}\in E(v)$ such that,
for every pair of vertices $x, y\in V(G)-\{v\}$, the local edge-connectivity between $x$ and $y$
in the graph $G_{(v;\{e_{1},e_{2}\})}$ remains the same as in $G$.
\end{theorem}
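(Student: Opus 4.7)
The plan is to argue by contradiction using Mader's \emph{dangerous set} technique. Call a pair $\{e_1, e_2\} = \{vx_1, vx_2\} \subset E(v)$ \emph{admissible} if the splitting $G_{(v;\{e_1,e_2\})}$ preserves $\lambda_G(s, t)$ for every $s, t \in V(G) - \{v\}$; note that for such pairs the new vertex $v^*$ has degree $2$ and may be viewed as subdividing a single edge $x_1 x_2$, so the operation coincides with the classical splitting-off for the purpose of local edge-connectivity in $V(G) - \{v\}$. Assume, toward a contradiction, that no admissible pair exists.

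First, I would characterize non-admissibility via cuts. For any $X \subset V(G) - \{v\}$, splitting $\{e_1, e_2\}$ changes $d(X)$ by $-2$ if $x_1, x_2 \in X$, by $-1$ if exactly one lies in $X$, and by $0$ otherwise. Hence $\{e_1, e_2\}$ is non-admissible if and only if there exists $X \subset V(G) - \{v\}$ containing $x_1$ and $x_2$ with $d_G(X) \leq \lambda_G(s, t) + 1$ for some $s \in X$ and $t \in V(G) - X - \{v\}$ (the deficiency $1$ accounts for both endpoints in $X$, equality for exactly one). Following Mader, call such an $X$ \emph{dangerous}. Under the contradiction hypothesis, for every pair of edges at $v$ there is a dangerous set containing both their non-$v$ endpoints.

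Next, I would uncross the resulting family of dangerous sets via the submodular inequality $d(A) + d(B) \geq d(A \cap B) + d(A \cup B)$, combined with the Menger-type bound $\lambda_G(s, t) \leq d(X)$ for every $s$-$t$-cut $X$. Extremal choices (for instance, minimizing $|X|$, or the deficiency $\lambda_G(s, t) + 1 - d(X)$) allow one to replace crossing dangerous sets by a laminar (cross-free) subfamily whose maximal element $X^*$ still witnesses non-admissibility of every pair and, in particular, contains every neighbor of $v$. I would then contradict the dangerousness of $X^*$: since $v$ has at least two distinct neighbors and $v$ is not a cut-vertex, $G - v$ is connected, so some edge must leave $X^*$ via a non-$v$ path; together with $d(v) \geq 4$ (all of whose edges leave $X^*$ through $v$), these contributions force $d(X^*)$ strictly above $\lambda_G(s, t) + 1$ for every admissible pair $(s, t)$, yielding the desired contradiction.

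The main obstacle will be the bookkeeping in the uncrossing step: dangerousness depends on an auxiliary pair $(s, t)$, and different non-admissible edge-pairs at $v$ may be witnessed by dangerous sets using \emph{different} $(s, t)$. One must therefore verify that after intersecting or unioning two dangerous sets the resulting set remains dangerous for \emph{some} valid $(s, t)$, which requires both an extremal choice (minimizing some invariant of the witness $(X, s, t)$) and a careful appeal to Menger duality to produce, from the submodular inequality, a pair $(s', t')$ realizing the required $\lambda$-bound on the combined set. This is the delicate heart of the argument, but once the laminar reduction is in hand the degree and non-cut-vertex hypotheses deliver the contradiction essentially by edge counting.
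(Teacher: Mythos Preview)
The paper does not prove this theorem at all: it is stated, with attribution to Mader~\cite{Mader1978}, purely as background in the survey of splitting results in Section~4.1, alongside the Nash--Williams and Zhang splitting theorems, none of which receives a proof. There is therefore no ``paper's own proof'' to compare your proposal against.

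That said, your outline follows the classical dangerous-set/uncrossing strategy that Mader himself used, so in spirit you are on the right track. A couple of points in the sketch are not quite right and would need repair in a full write-up. First, your cut-change bookkeeping is off: when exactly one of $x_1,x_2$ lies in $X$ (and $v\notin X$), splitting does \emph{not} decrease $d(X)$; the drop is $2$ precisely when both lie in $X$, and $0$ otherwise, so the dangerous condition is simply $x_1,x_2\in X$ with $d(X)\le \lambda(s,t)+1$ for some $s\in X$, $t\notin X\cup\{v\}$. Second, the endgame you describe---obtaining a single maximal dangerous set $X^*$ containing all neighbours of $v$ and then contradicting its dangerousness by edge counting---is not how Mader's argument actually closes; the genuine proof shows that the maximal dangerous sets containing a fixed neighbour partition the remaining neighbours into at most three blocks, and then uses a parity/degree count (this is where $d(v)\ge 4$ and the non-cut-vertex hypothesis enter) to force an admissible pair. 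Your version, as written, does not explain why $d(X^*)$ must exceed $\lambda(s,t)+1$; having all edges of $E(v)$ cross into $X^*$ only gives $d(X^*)\ge d(v)$, which is not by itself a contradiction since $\lambda(s,t)$ could be large.
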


\begin{theorem}
[\cite{Zhang2002}]
\label{TH: Zhang splitting HC}
Let $G$ be a graph with odd-edge-connectivity at least $\lambda_o$.
Let $v$ be a vertex of $G$ such that $d(v)\neq\lambda_o$ and $E(v) =\{e_0, e_1,\ldots, e_{d(v)-1}\}$.
Then there is a pair of edges $e_i, e_{i+1}\in E(v)$ (subindices modulo ${d(v)}$)
such that the graph $G_{(v;\{e_i,e_{i+1}\})}$ remains odd-$\lambda_o$-edge-connected.
\end{theorem}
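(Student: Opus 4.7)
The plan is a proof by contradiction using submodularity of the cut function together with a minimality/parity analysis. Throughout, write $d = d(v)$ and let $u_i$ denote the endpoint of $e_i$ distinct from $v$. I would first show that splitting $\{e_i, e_{i+1}\}$ away from $v$ can only break odd-$\lambda_o$-edge-connectivity via a \emph{dangerous set}: a subset $Y_i \subseteq V(G)$ with $v \in Y_i$, $u_i, u_{i+1} \notin Y_i$, and $d_G(Y_i) = \lambda_o$ (odd and tight). A routine case analysis on the positions of $v$, the new vertex $v^\ast$, and $u_i, u_{i+1}$ relative to a putative small cut in $G_{(v;\{e_i,e_{i+1}\})}$ shows that any other configuration either preserves or increases the cut size, and parity is preserved throughout. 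Assuming the theorem fails, such a $Y_i$ exists for every $i \in \mathbb{Z}/d$; I would pick each one to be inclusion-minimal.

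Next, for any pair $Y_i, Y_j$ of dangerous cuts, submodularity yields
\[
2\lambda_o \;=\; d_G(Y_i) + d_G(Y_j) \;\geq\; d_G(Y_i \cap Y_j) + d_G(Y_i \cup Y_j),
\]
and the parity identity $d_G(X) \equiv \sum_{w \in X} d_G(w) \pmod 2$ forces the two right-hand cuts to have matching parity. In the odd-parity subcase, each right-hand cut is at least $\lambda_o$ by the odd-$\lambda_o$-edge-connectivity hypothesis, so each equals $\lambda_o$; since $Y_i \cap Y_j$ still separates $v$ from $u_i, u_{i+1}$, minimality of $Y_i$ forces $Y_i \cap Y_j = Y_i$, i.e., $Y_i \subseteq Y_j$, and symmetrically $Y_j \subseteq Y_i$. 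The whole family thus collapses to a single common tight set $Y^\ast$ separating $v$ from every $u_j$.

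Finally, since $E(v) \subseteq [Y^\ast, \overline{Y^\ast}]$, we have $d_G(v) \leq d_G(Y^\ast) = \lambda_o$. Consider $Z := Y^\ast \setminus \{v\}$: because every $u_j \notin Y^\ast$, there is no edge from $v$ into $Z$, so $d_G(Z) = \lambda_o - d_G(v)$. If $d_G(v)$ is odd, then $d_G(v) \leq \lambda_o$ together with $d_G(v) \neq \lambda_o$ makes $E(v)$ itself an odd cut of size at most $\lambda_o - 2$, contradicting odd-$\lambda_o$-edge-connectivity. If $d_G(v)$ is even, then $d_G(Z) = \lambda_o - d_G(v)$ is odd and nonzero (else $Y^\ast = \{v\}$ would give $d_G(v) = \lambda_o$, of the wrong parity), so odd-$\lambda_o$-edge-connectivity forces $d_G(Z) \geq \lambda_o$, hence $d_G(v) \leq 0$, again a contradiction.

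The main obstacle I anticipate is the \emph{even-parity} subcase of the uncrossing, where both $d_G(Y_i \cap Y_j)$ and $d_G(Y_i \cup Y_j)$ are even and the odd-$\lambda_o$-edge-connectivity hypothesis gives no direct lower bound on their sizes. There the minimality of the $Y_i$'s does not trigger automatically. The right fix, I expect, is to invoke the companion posimodular inequality $d_G(Y_i) + d_G(Y_j) \geq d_G(Y_i \setminus Y_j) + d_G(Y_j \setminus Y_i)$ and impose a secondary minimality criterion on each $Y_i$ (for example, subject to inclusion-minimality, maximize the number of edges of $E(v)$ in $[Y_i, \overline{Y_i}]$) so that any even uncrossing either produces a dangerous cut strictly smaller than $Y_i$ (contradicting the primary minimality) or forces the configuration back into the odd-parity subcase that the collapse argument handles.
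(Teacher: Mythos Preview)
The paper does not actually prove Theorem~\ref{TH: Zhang splitting HC}: it is quoted as a known result from \cite{Zhang2002}, and even for the more general Theorem~\ref{TH: splitting} the paper only states that ``the proof \ldots\ is identical to the one in~\cite{Zhang2002}'' (with an alternative in~\cite{Sziget2008}). So there is no in-paper argument to compare your attempt against.

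Evaluated on its own, your dangerous-set/uncrossing framework is the standard and correct one, and your endgame---once the family $\{Y_i\}$ has collapsed to a single tight set $Y^\ast$ separating $v$ from every $u_j$---does yield the contradiction (with one small correction: your identity $d_G(Z)=\lambda_o-d_G(v)$ tacitly assumes $v$ carries no loops; in general $d_G(Z)=\lambda_o-d_G(\{v\})$, but the argument survives this adjustment). The genuine gap is exactly the one you name: in the even-parity branch of the submodular uncrossing, neither $d_G(Y_i\cap Y_j)$ nor $d_G(Y_i\cup Y_j)$ is bounded below by the odd-$\lambda_o$-edge-connectivity hypothesis, so the minimality of $Y_i$ cannot be triggered and the conclusion $Y_i=Y_j$ does not follow. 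Your suggested patch via posimodularity is pointed in the right direction---indeed, when $d_G(Y_i\cap Y_j)$ is even the parity identity forces both $d_G(Y_i\setminus Y_j)$ and $d_G(Y_j\setminus Y_i)$ to be \emph{odd}, so posimodularity together with odd-$\lambda_o$-edge-connectivity makes each equal to $\lambda_o$---but you stop short of the crucial step: exhibiting, from one of these differences (or their complements), a set that is dangerous for some \emph{consecutive} pair $(e_k,e_{k+1})$ and strictly smaller than the corresponding $Y_k$. Since $v\notin Y_i\setminus Y_j$ and the positions of $u_i,u_{i+1},u_j,u_{j+1}$ relative to these differences are not determined, this is not automatic, and a bare ``secondary minimality criterion'' does not by itself supply it. Until that step is written out, the collapse to a common $Y^\ast$ is unproven and the argument is incomplete.
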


\begin{definition}
Let $G$ be a graph and $v$ be a vertex.
Let $S(v)$ be a subset of $\{(e_{i},e_{j})\colon e_i, e_j\in E(v)~\text{and}~e_i\neq e_j\}$.
The subset $S(v)$ is {\em sequentially connected} if, for every pair of edges $e', e''\in E(v)$, there is a sequence
$(e_0,e_1), (e_1,e_2),\ldots, (e_{t-2},e_{t-1}), (e_{t-1},e_t)\in S(v)$ (subindices modulo $d(v)$)
such that $e'=e_0$ and $e''=e_t$.
\end{definition}

In Theorem~\ref{TH: Zhang splitting HC},
the subset $S(v) = \{(e_i, e_{i+1})\colon i\in\mathbb Z_{d(v)}\}$ is sequentially connected.
Therefore, the following theorem is a generalization of Theorem~\ref{TH: Zhang splitting HC},
and is expected to have many applications in graph theory.
The proof of Theorem~\ref{TH: splitting} is identical to the one in~\cite{Zhang2002}
and an alternative proof can be also found in~\cite{Sziget2008}.

\begin{theorem}
\label{TH: splitting}
Let $G$ be a graph with odd-edge-connectivity at least $\lambda_o$ and $v$ be a vertex with $d(v)\neq\lambda_o$.
Let $S(v)$ be a subset of $\{(e_i,e_j)\colon e_i, e_j\in E(v) ~\text{and}~ e_i\neq e_j\}$.
If the subset $S(v)$ is sequentially connected, then there is a pair of edges $(e',e'')\in S(v)$
such that the graph $G_{(v;\{e',e''\})}$ remains odd-$\lambda_o$-edge-connected.
\end{theorem}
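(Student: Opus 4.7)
The plan is to follow the strategy of Zhang~\cite{Zhang2002}. I will argue by contradiction: suppose that for every pair $(e',e'')\in S(v)$, the graph $G_{(v;\{e',e''\})}$ fails to be odd-$\lambda_o$-edge-connected. The first step is to characterize such ``bad'' pairs in terms of minimum odd edge cuts at $v$. Given $F=\{e',e''\}\subseteq E(v)$, any edge cut $[X,\bar X]$ of $G_{(v;F)}$ that does not separate $v$ from the new vertex $v^*$ is also an edge cut of $G$, so cannot violate odd-$\lambda_o$-edge-connectivity. When $v\in X$ and $v^*\notin X$, writing $Y=X\cap V(G)$ gives $d_{G_{(v;F)}}(X)=d_G(Y)+\Delta$ with $\Delta\in\{-2,0,2\}$ according to how many of the two edges of $F$ have their non-$v$ endpoint in $Y$. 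Since $d_{G_{(v;F)}}(X)\equiv d_G(Y)\pmod 2$, a new odd cut of size strictly less than $\lambda_o$ can arise only when $\Delta=-2$ and $d_G(Y)=\lambda_o$; equivalently, both $e',e''$ lie in $F_Y:=E(v)\cap[Y,\bar Y]$. Hence the assumption forces every pair in $S(v)$ to be contained in $F_Y$ for some minimum odd edge cut $[Y,\bar Y]$ with $v\in Y$.

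The second step invokes the \emph{sequentially connected} hypothesis. The key merging lemma is that, for two minimum odd cuts $Y_1,Y_2$ both containing $v$ and sharing some edge in $F_{Y_1}\cap F_{Y_2}$, there exists a minimum odd cut $Y$ with $v\in Y$ and $F_{Y_1}\cup F_{Y_2}\subseteq F_Y$. The proof rests on the submodular inequality $d_G(Y_1\cap Y_2)+d_G(Y_1\cup Y_2)\leq d_G(Y_1)+d_G(Y_2)=2\lambda_o$ combined with parity: in the subcase when both $d_G(Y_1\cap Y_2)$ and $d_G(Y_1\cup Y_2)$ are odd, odd-$\lambda_o$-edge-connectivity forces each to equal $\lambda_o$, and $F_{Y_1}\cup F_{Y_2}\subseteq F_{Y_1\cap Y_2}$ follows directly because any $e\in F_{Y_i}$ has its non-$v$ endpoint in $\bar Y_i\subseteq\overline{Y_1\cap Y_2}$. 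Iterating this lemma along any sequential chain in $S(v)$ connecting two arbitrary edges of $E(v)$, with the common edge in consecutive cuts providing the required overlap, I obtain a single minimum odd edge cut $[Y,\bar Y]$ with $v\in Y$ and $F_Y=E(v)$.

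Finally, $F_Y=E(v)$ means every edge at $v$ crosses $[Y,\bar Y]$, so $v$ has no neighbor inside $Y\setminus\{v\}$ and $d_G(Y)=d(v)+e_G(Y\setminus\{v\},\bar Y)$; thus $d(v)\leq d_G(Y)=\lambda_o$. If $d(v)$ is odd, the singleton $\{v\}$ is itself an odd cut, hence $d(v)\geq\lambda_o$, forcing $d(v)=\lambda_o$ and contradicting the hypothesis. If $d(v)$ is even with $d(v)<\lambda_o$, then $Y\setminus\{v\}$ is a nontrivial vertex set with $d_G(Y\setminus\{v\})=\lambda_o-d(v)$, which is odd (since $\lambda_o$ is odd) and strictly less than $\lambda_o$, contradicting odd-$\lambda_o$-edge-connectivity. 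The main obstacle I anticipate is the merging lemma, specifically the parity subcase in which both $d_G(Y_1\cap Y_2)$ and $d_G(Y_1\cup Y_2)$ are even; this requires the delicate symmetric-difference analysis carried out in~\cite{Zhang2002} (with an alternative treatment in~\cite{Sziget2008}), which the authors invoke by declaring the proof identical to Zhang's.
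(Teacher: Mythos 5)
Your proposal is correct and takes essentially the same route as the paper, which offers no proof of Theorem~\ref{TH: splitting} beyond declaring it identical to the uncrossing argument of Zhang~\cite{Zhang2002}; your reconstruction — reducing every bad pair to containment in a tight odd cut $F_Y$ with $d_G(Y)=\lambda_o$, merging such cuts by submodularity, and using sequential connectivity to force $F_Y=E(v)$ and hence $d(v)=\lambda_o$ — is precisely that argument. The even--even subcase you defer is where the shared edge is actually used: the complementary identity $d(Y_1\setminus Y_2)+d(Y_2\setminus Y_1)\le 2\lambda_o-2e(Y_1\cap Y_2,\overline{Y_1\cup Y_2})$, together with the shared edge joining $v\in Y_1\cap Y_2$ to a vertex of $\overline{Y_1\cup Y_2}$, forces $Y_1\subseteq Y_2$ or $Y_2\subseteq Y_1$, so nothing essential is missing.
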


The following corollary is an analog of Theorem~\ref{TH: Nash-William} with respect to odd-edge-connectivity.

\begin{corollary}
\label{CORO: splitting}
Let $G$ be a graph with odd-edge-connectivity at least $\lambda_o$ and $v$ be a vertex with $d(v) >\lambda_o$.
Let $S(v)=\{(e_i,e_j)\colon e_i, e_j\in E(v)~\text{and}~e_i\neq e_j\}$
and $a$ be an even integer such that $a\leq d(v)-\lambda_o$.
Then there is an edge subset $F\subset E(v)$ of size $a$, consisting of
disjoint elements of $S(v)$, such that $G_{(v;F)}$ remains odd-$\lambda_o$-edge-connected.
\end{corollary}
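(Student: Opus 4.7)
The plan is to prove the statement by induction on the even integer $a$, using Theorem~\ref{TH: splitting} as the main tool in the inductive step. The base case $a = 0$ is trivial: take $F = \emptyset$, and no splitting is required.

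For the inductive step, assume $a \geq 2$. I would apply Theorem~\ref{TH: splitting} to $G$ at $v$ with $S(v)$ taken to be the entire set of pairs of distinct edges in $E(v)$. Because every ordered pair already belongs to $S(v)$, this choice of $S(v)$ is trivially sequentially connected (one can use the length-one sequence between any two edges). Since $a \geq 2$ and $a \leq d(v) - \lambda_o$, we have $d(v) \geq \lambda_o + 2 > \lambda_o$, so the hypothesis $d(v) \neq \lambda_o$ of Theorem~\ref{TH: splitting} is satisfied. Therefore there exists a pair $(e', e'') \in S(v)$ such that $G_1 := G_{(v;\{e', e''\})}$ remains odd-$\lambda_o$-edge-connected.

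In $G_1$, the vertex $v$ now has degree $d(v) - 2$. The integer $a - 2$ is still even, nonnegative, and satisfies $a - 2 \leq (d(v) - 2) - \lambda_o$. Applying the induction hypothesis to $G_1$ with the parameter $a - 2$, I obtain an edge subset $F_1 \subset E_{G_1}(v)$ of size $a - 2$, consisting of $(a-2)/2$ disjoint pairs from $S_{G_1}(v)$, such that $(G_1)_{(v; F_1)}$ is odd-$\lambda_o$-edge-connected. Because $E_{G_1}(v) = E_G(v) \setminus \{e', e''\}$, the pairs constituting $F_1$ are automatically edge-disjoint from the pair $\{e', e''\}$ and each element of $F_1$ is also an element of $S(v)$ in the original graph $G$. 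Setting $F = F_1 \cup \{e', e''\}$ and observing that $G_{(v; F)} = (G_1)_{(v; F_1)}$ by the definition of vertex splitting, we conclude that $G_{(v; F)}$ is odd-$\lambda_o$-edge-connected and $F$ has size $a$ with the required structure.

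The main potential obstacle is confirming that the degree condition for Theorem~\ref{TH: splitting} continues to hold through every iteration of the induction. This is handled uniformly by the inequality $d(v) \geq \lambda_o + a$: after $i$ splits have been performed, the vertex has degree $d(v) - 2i \geq \lambda_o + (a - 2i) \geq \lambda_o + 2$ whenever another split is still required (i.e., $i \leq a/2 - 1$), so the condition $d(v) \neq \lambda_o$ never fails during the recursion. The only other mild point is verifying that the splittings commute in the sense that the composition of $a/2$ two-edge splittings at $v$ coincides with the single operation $G_{(v;F)}$ for the union $F$ of the split pairs, which is immediate from the definition.
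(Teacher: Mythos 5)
Your proof is correct and follows essentially the same route as the paper, which simply applies Theorem~\ref{TH: splitting} repeatedly $a/2$ times at $v$ and collects the split pairs; your inductive formulation is just a more carefully organized version of that iteration, with the useful added check that $d(v)-2i\geq\lambda_o+2$ keeps the hypothesis $d(v)\neq\lambda_o$ valid at every step.
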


\begin{proof}
Let $a=2b$. Now we apply Theorem~\ref{TH: splitting} to $v$ repeatedly $b$ times at $v$.
Then the resulting graph remains odd-$\lambda_o$-edge-connected.
Denote by $\{v^*_1,\ldots, v^*_b\}$ the set of the resulting vertices of degree two.
It is easy to see that the collection of the edges incident with $v^*_i$ 
for $i=1,\ldots,b$ is a desired edge subset $F$ of $E(v)$.
\end{proof}

\subsection{An application of Tutte's \texorpdfstring{$f$}{f}-factor theorem}

Theorem~\ref{TH: mod-circular} will be proved by applying both
Theorem~\ref{TH: splitting} and some $f$-factor lemmas (such as, Lemma~\ref{LE: mu-p-factor})
in this section.

\begin{definition}
Let $G$ be a graph and $f\colon V(G) \to \mathbb Z^+$ be a mapping.
An {\em $f$-factor} of $G$ is a subgraph $H$ such that
$d_{H}(v)= f(v)$ for each vertex $v\in V(G)$. In particular,
if the range of $f$ is $\{1,2\}$, we simply call $H$ a {\em $\{1,2\}$-factor}.
\end{definition}

In~\cite{Tutte1952}, Tutte gave a necessary and sufficient condition of the existence of $f$-factors.

\begin{theorem}
[\cite{Tutte1952}]
\label{LE: tutte-factor}
A graph $G$ has an $f$-factor if and only if for any two disjoint vertex subsets $S,T \subseteq V(G)$,
\begin{equation}
\sum_{v\in S}f(v)\geq |\mathcal{O}(S,T)| +\sum_{v\in T} [f(v)-d_{G-S}(v)],
\label{EQ: tutte-factor-1}
\end{equation}
where $\mathcal{O}(S,T)$ is the set of components $U$ of $G - S - T$ for which
\begin{equation}
\sum_{v\in U}f(v)+e(U,T)\equiv 1\pmod 2.
\label{EQ: odd-component}
\end{equation}
\end{theorem}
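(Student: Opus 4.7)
My plan is to derive Tutte's $f$-factor theorem from Tutte's classical $1$-factor (perfect matching) theorem via the well-known gadget construction. For each vertex $v\in V(G)$, build in an auxiliary graph $H$ a bipartite gadget with an ``edge-node'' set $A_v=\{a_v^e:e\in E(v)\}$ of size $d(v)$ and a ``dummy-node'' set $B_v$ of size $d(v)-f(v)$, joined by all possible edges between $A_v$ and $B_v$. For each edge $e=uv$ in $G$, add the link $a_u^e a_v^e$ in $H$. A direct bijection shows that $G$ has an $f$-factor if and only if $H$ has a perfect matching: an $f$-factor $F$ yields a perfect matching by pairing $a_u^e$ with $a_v^e$ for $e\in F$ and completing inside each gadget via the complete bipartite structure, since exactly $d(v)-f(v)$ edge-nodes of $A_v$ are left over, matching $|B_v|$; conversely every perfect matching of $H$ restricts to such an $f$-factor.

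Next, apply Tutte's $1$-factor theorem to $H$: $H$ has no perfect matching if and only if some $X\subseteq V(H)$ satisfies $o(H-X)>|X|$. The key reduction is to show such an $X$ can be chosen in canonical form parametrized by disjoint $S,T\subseteq V(G)$: for $v\in S$, take $A_v\subseteq X$ and $B_v\cap X=\emptyset$; for $v\in T$, take $B_v\subseteq X$ and $A_v\cap X=\emptyset$; and for $v\notin S\cup T$, take both disjoint from $X$. Granting this, the odd components of $H-X$ decompose as (i) the $d(v)-f(v)$ isolated dummy nodes for each $v\in S$, (ii) one isolated edge-node for each edge between $S$ and $T$, and (iii) for each component $U$ of $G-S-T$, one large component whose order is $\sum_{v\in U}(2d(v)-f(v))+e(U,T)\equiv\sum_{v\in U}f(v)+e(U,T)\pmod 2$, producing precisely $\mathcal{O}(S,T)$. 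Substituting into $|X|=\sum_{v\in S}d(v)+\sum_{v\in T}(d(v)-f(v))$ and using $\sum_{v\in T}d_{G-S}(v)=\sum_{v\in T}d(v)-e(S,T)$, the inequality $o(H-X)>|X|$ rearranges exactly to the negation of~(\ref{EQ: tutte-factor-1}).

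The main obstacle is justifying the reduction to canonical $X$. The standard exchange argument processes a violating $X$ vertex by vertex, examining $A_v\cap X$ and $B_v\cap X$, and shows that one may always move each $v$ to a canonical configuration ($v\in S$, $v\in T$, or $v$ free) without decreasing $o(H-X)-|X|$. The delicate point is that whenever $A_v\cap X$ is a proper nonempty subset of $A_v$, adding the remaining edge-nodes to $X$ costs exactly the right number of units to offset the newly isolated dummy nodes in $B_v$, and a symmetric analysis handles partial $B_v\cap X$; extra care is needed when $v$ has multi-edges or loops so that the two endpoints of an edge may sit in the same gadget. Once this normalization is complete, the rest of the proof reduces to the parity and arithmetic bookkeeping sketched above, which is routine.
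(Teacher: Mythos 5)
First, a framing point: the paper does not prove this statement at all --- it is Tutte's $f$-factor theorem, quoted from \cite{Tutte1952} and used as a black box in the proof of Lemma~\ref{LE: 1-2-factor} --- so there is no in-paper proof to compare yours against; I can only assess your argument on its own terms. Your route (reduce to the $1$-factor theorem via the complete-bipartite gadget with $A_v$ and $B_v$) is the classical one, and the first half --- $G$ has an $f$-factor if and only if $H$ has a perfect matching --- is correct and essentially complete as you describe it.

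The second half has a genuine gap, and it is not merely that the normalization of $X$ is deferred to ``the standard exchange argument.'' Your component accounting in item (iii) is false as stated: for a component $U$ of $G-S-T$, the corresponding portion of $H-X$ need not be one ``large component.'' If some $v\in U$ has $f(v)=d(v)$, then $B_v=\emptyset$ and the gadget of $v$ is an edgeless set of $d(v)$ nodes, so the $U$-part can shatter into several components; for instance $U=\{v\}$ with $d(v)=f(v)=3$ and all three edges going to $S$ yields three isolated odd components of $H-X$ while contributing only $1$ to $|\mathcal{O}(S,T)|$. This is harmless for the necessity direction, which only needs the lower bound $o(H-X)\geq\sum_{v\in S}(d(v)-f(v))+e(S,T)+|\mathcal{O}(S,T)|$ (an odd total order guarantees at least one odd component). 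But it breaks the sufficiency direction: rearranging $o(H-X)>|X|$ into the negation of~(\ref{EQ: tutte-factor-1}) requires the matching \emph{upper} bound, i.e.\ that each $U$-part contributes at most one odd component, and that can fail. Repairing this forces the normalization to do more than you indicate --- e.g.\ vertices whose shattered gadgets cause overcounting must be pushed into $T$, where the surplus isolated edge-nodes are absorbed by the $e(S,T)$ term and by $\sum_{v\in T}[f(v)-d_{G-S}(v)]$. That normalization-plus-accounting is where essentially all of the difficulty of the theorem lives, so declaring it ``routine'' leaves the proof incomplete at its crux.
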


Next we apply Tutte's $f$-factor theorem to find a $\{1,2\}$-factor for graphs defined below.

\begin{lemma}\label{LE: 1-2-factor}
Let $k$ be an odd integer and $G$ be an odd-$k$-edge-connected graph.
Let $\{V_1, V_2\}$ be a partition of $V(G)$ such that $d_{G}(v) = k$ if $v\in V_1$ and $d_{G}(v)=2k$ if $v\in V_2$.
If $f$ is a function satisfying $f(v)={d_{G}(v)}/{k}$ for each vertex $v$, then $G$ has an $f$-factor.
\end{lemma}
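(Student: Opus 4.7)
The plan is to apply Tutte's $f$-factor theorem (Theorem~\ref{LE: tutte-factor}). Substituting $d_G(v)=kf(v)$ into the Tutte inequality and using $\sum_{v\in T}d_{G-S}(v)=kF_T-e(S,T)$, where $F_X:=\sum_{v\in X}f(v)$, the task reduces to verifying the asymmetric inequality
$$F_S+(k-1)F_T-e(S,T)\;\geq\;|\mathcal{O}(S,T)|$$
for all disjoint $S,T\subseteq V(G)$.

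The first step is a parity analysis of each component $U$ of $G-S-T$. Since $k$ is odd, both $\sum_{v\in U}f(v)$ and $\sum_{v\in U}d_G(v)$ have the parity of $|U\cap V_1|$, and hence so does the cut $e(U,V(G)\setminus U)=e(U,S)+e(U,T)$. This justifies splitting $\mathcal{O}(S,T)$ into
$$\mathcal{A}=\{U\in\mathcal{O}(S,T)\colon |U\cap V_1|\text{ is odd}\}\quad\text{and}\quad \mathcal{B}=\{U\in\mathcal{O}(S,T)\colon |U\cap V_1|\text{ is even}\}.$$
For $U\in\mathcal{A}$ the cut of $U$ is odd, hence has size at least $k$ by odd-$k$-edge-connectivity. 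For $U\in\mathcal{B}$ the defining parity condition forces $e(U,T)$ odd, and the cut parity then forces $e(U,S)$ odd as well, so both are at least $1$.

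Next, summing the cut lower bounds over $\mathcal{O}(S,T)$ and bounding by the total degree at $S$ and $T$ yields
$$k|\mathcal{A}|+2|\mathcal{B}|\;\leq\;kF_S+kF_T-2e(S,T),\qquad(*)$$
while counting just edges from $\mathcal{B}$-components to $T$ gives
$$|\mathcal{B}|\;\leq\;\sum_{U\in\mathcal{B}}e(U,T)\;\leq\;kF_T-e(S,T).\qquad(**)$$
Substituting $(*)$ and $(**)$ into the algebraic identity $|\mathcal{A}|+|\mathcal{B}|=\tfrac{1}{k}(k|\mathcal{A}|+2|\mathcal{B}|)+\tfrac{k-2}{k}|\mathcal{B}|$ and simplifying collapses to exactly $F_S+(k-1)F_T-e(S,T)$, verifying Tutte's condition.

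The main obstacle is choosing the correct bound on $|\mathcal{B}|$: a symmetric estimate of the form $|\mathcal{B}|\leq kF_S-e(S,T)$ loses the asymmetry of the target inequality and fails whenever $F_S>F_T$. It is precisely the $T$-side parity bound $(**)$, combined with the convex-combination identity above, that produces the coefficient $(k-1)$ on $F_T$ on the right-hand side of Tutte's inequality.
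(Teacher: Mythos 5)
Your proposal is correct and rests on the same pillars as the paper's proof---Tutte's $f$-factor theorem, the parity of the cuts around components of $G-S-T$, and odd-$k$-edge-connectivity---but the bookkeeping is genuinely different. The paper splits $\mathcal{O}(S,T)$ into $\mathcal{O}_1=\{U\colon e(U,T)=0\}$ and $\mathcal{O}_2=\{U\colon e(U,T)\neq 0\}$, charges a full odd cut of size at least $k$ to the $S$-side only for $U\in\mathcal{O}_1$, and recovers the missing $(k-1)|\mathcal{O}_2|$ from a refined lower bound on $e(S,T)$ obtained by partitioning $T$ into four classes $Q_1,\dots,Q_4$ according to $d_{G-S}(v)$; everything is then funneled through the single comparison of upper and lower bounds for $e(S,S^{\mathrm c})$. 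You instead split $\mathcal{O}(S,T)$ by the parity of $|U\cap V_1|$, lower-bound the \emph{entire} cut of each $\mathcal{A}$-component by $k$ (note that your $\mathcal{A}$ strictly contains the paper's $\mathcal{O}_1$, since it also picks up components with $e(U,T)$ even and positive), and then blend the $S$-side and $T$-side degree counts via the convex-combination identity $|\mathcal{A}|+|\mathcal{B}|=\tfrac1k(k|\mathcal{A}|+2|\mathcal{B}|)+\tfrac{k-2}{k}|\mathcal{B}|$. This avoids the $Q_i$-partition of $T$ entirely and makes transparent where the asymmetric coefficient $(k-1)$ on $F_T$ comes from, which I find cleaner. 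One small caveat: the final step multiplies the upper bound $(**)$ on $|\mathcal{B}|$ by $\tfrac{k-2}{k}$, which must be nonnegative, so your argument as written requires $k\geq 3$; the case $k=1$ (where $f(v)=d_G(v)$ and $G$ is its own $f$-factor) should be dispatched separately, though it is irrelevant to the application with $k=2p+1\geq 3$.
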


\begin{Proof}
Let $S$ and $T$ be two disjoint subsets of $V(G)$ and $\mathcal{O} = \mathcal{O}(S,T)$.
Let $\{Q_1,Q_2,Q_3,Q_4\}$ be a partition of $T$,
where for each $t\in\{1,2\}$, $Q_t$ consists of the vertices $v \in T\cap V_t$ such that $d_{G-S}(v)=0$,
$Q_3$ consists of the vertices $v$ of $T \cap V_2$ such that $d_{G-S}(v)=1$, and $Q_4=T-Q_1-Q_2-Q_3$.
The following claim directly follows from the definitions.

\begin{claim}
\begin{enumerate}
\item $kf(v) = d_G(v)$ and $f(v) \equiv d_G(v) \pmod 2$ for each vertex $v$.
\item $\sum_{v\in U}d_{G}(v)+e(U,T)\equiv 1\pmod 2$ for each $U \in \mathcal{O}$.
\end{enumerate}
\label{basic}
\end{claim}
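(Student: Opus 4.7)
The plan is to verify both parts of Claim~\ref{basic} by a straightforward unpacking of the definitions, with a two-case split according to whether a vertex lies in $V_1$ or $V_2$.

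For part (1), the hypothesis $f(v) = d_G(v)/k$ immediately rearranges to $kf(v) = d_G(v)$, which is the first half. For the parity assertion $f(v)\equiv d_G(v)\pmod 2$, I will split on the partition $\{V_1, V_2\}$. If $v \in V_1$, then $d_G(v) = k$ and $f(v) = 1$; since $k$ is odd by hypothesis, both are odd and they agree modulo $2$. If $v \in V_2$, then $d_G(v) = 2k$ and $f(v) = 2$, which are both even, and again they agree modulo $2$. This handles part (1) in its entirety, and one should note that the oddness of $k$ is used exactly once, in the $V_1$ case.

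For part (2), the starting point is the defining condition (\ref{EQ: odd-component}) for membership in $\mathcal{O}(S,T)$, namely $\sum_{v \in U} f(v) + e(U,T) \equiv 1 \pmod 2$. The pointwise parity identity from part (1) gives $f(v) \equiv d_G(v) \pmod 2$ for every $v$, so summing over $U$ yields $\sum_{v \in U} f(v) \equiv \sum_{v \in U} d_G(v) \pmod 2$. Substituting this congruence into (\ref{EQ: odd-component}) produces $\sum_{v \in U} d_G(v) + e(U,T) \equiv 1 \pmod 2$, which is exactly the conclusion of part (2).

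There is no substantive obstacle here: the claim is essentially a bookkeeping step that replaces $f(v)$ by $d_G(v)$ inside the odd-component parity condition. Its purpose is evidently to translate Tutte's $f$-factor criterion into the language of vertex degrees, so that the subsequent verification of the inequality (\ref{EQ: tutte-factor-1}) can be argued directly from the odd-edge-connectivity hypothesis $k$, rather than through the auxiliary function $f$. The only mild care required is to record that the oddness of $k$ is what makes the parity agree in the $V_1$ case.
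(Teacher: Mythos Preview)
Your proof is correct and matches the paper's approach exactly: the paper simply states that the claim ``directly follows from the definitions,'' and your argument is precisely the routine unpacking of those definitions (the case split on $V_1$ versus $V_2$ for the parity in part~(1), and the substitution of $f(v)\equiv d_G(v)\pmod 2$ into \eqref{EQ: odd-component} for part~(2)).
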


We partition $\mathcal{O}$ into $\mathcal{O}_1$ and $\mathcal{O}_2$, where

\begin{center}
$\mathcal{O}_1 = \{U \in \mathcal{O} \colon e(U,T) = 0\}$\quad\text{and}\quad
$\mathcal{O}_2 = \{U \in \mathcal{O} \colon e(U,T) \not = 0\}$.
\end{center}

\begin{claim}
\label{U-S}
 $$\sum_{U \in \mathcal{O}} e(U,S) \geq k |\mathcal{O}_1| + |\mathcal{O}_2|.$$
\end{claim}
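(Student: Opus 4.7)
The plan is to prove the inequality via two separate per-component lower bounds on $e(U,S)$: the strong bound $e(U,S)\geq k$ for $U\in\mathcal{O}_1$ and the weak bound $e(U,S)\geq 1$ for $U\in\mathcal{O}_2$. Both will be driven by a single parity computation showing that $e(U,S)$ is odd for every $U\in\mathcal{O}$.

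The first step is the parity computation. By item (1) of Claim~\ref{basic}, $f(v)\equiv d_G(v)\pmod 2$, so the defining congruence \eqref{EQ: odd-component} of $\mathcal{O}(S,T)$ becomes
\[
\sum_{v\in U} d_G(v) + e(U,T)\equiv 1\pmod 2.
\]
Combining this with the edge-counting identity $\sum_{v\in U} d_G(v)=2|E(G[U])|+e(U,S)+e(U,T)$ and reducing modulo $2$ immediately gives $e(U,S)\equiv 1\pmod 2$ for every $U\in\mathcal{O}$. In particular, $e(U,S)\geq 1$ whenever $U\in\mathcal{O}_2$, which supplies the required lower bound on the $\mathcal{O}_2$-part of the sum.

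For $U\in\mathcal{O}_1$ we have $e(U,T)=0$, so the cut $[U,V(G)\setminus U]$ coincides with $E(U,S)$ and has odd size $e(U,S)$. To invoke the odd-$k$-edge-connectivity hypothesis I must ensure $V(G)\setminus U\neq\emptyset$; otherwise $V(G)=U$ forces $S=T=\emptyset$, and then $\sum_{v\in U}d_G(v)$ is even by handshake, contradicting the congruence above. Hence $[U,V(G)\setminus U]$ is a genuine odd edge cut, and the odd-$k$-edge-connectivity of $G$ yields $e(U,S)\geq k$.

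Summing the bound $e(U,S)\geq k$ over $U\in\mathcal{O}_1$ and $e(U,S)\geq 1$ over $U\in\mathcal{O}_2$ gives $\sum_{U\in\mathcal{O}} e(U,S)\geq k|\mathcal{O}_1|+|\mathcal{O}_2|$, as claimed. I do not foresee a serious obstacle; the only subtle point is the brief verification that $V(G)\setminus U$ is nonempty, which is needed so that the odd-edge-connectivity hypothesis can actually be applied to the cut $[U,V(G)\setminus U]$.
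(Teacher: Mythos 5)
Your proof is correct and follows essentially the same route as the paper's: establish $e(U,S)\equiv 1\pmod 2$ for every $U\in\mathcal{O}$ via the parity of $\sum_{v\in U}d_G(v)$ and the defining congruence of $\mathcal{O}(S,T)$, then apply odd-$k$-edge-connectivity to the cut $E(U,S)$ when $U\in\mathcal{O}_1$ and the trivial bound $e(U,S)\geq 1$ when $U\in\mathcal{O}_2$. The only additions are your explicit re-derivation of Claim~\ref{basic}-(2) and the check that $V(G)\setminus U\neq\emptyset$, both of which the paper leaves implicit.
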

\begin{proof}
Note that if $U \in \mathcal{O}_1$, then $e(U,T) = 0$ and thus $E(U,S)$ is an edge-cut.
Since $G$ is odd-$k$-edge-connected, it suffices to show that for each $U \in \mathcal{O}$, $e(U,S) \equiv 1 \pmod 2.$

For each $U \in \mathcal{O}$, we have
\[\sum_{v\in U}d_{G}(v) \equiv e(U,T) + e(U,S) \equiv -e(U,T) + e(U,S) \pmod 2.\]
Thus by Claim~\ref{basic}-(2), we have $e(U,S) \equiv 1 \pmod 2$.
\end{proof}

\begin{claim}
\label{ST}
\[e(S,T)=\sum_{v\in T}[d_G(v)-d_{G-S}(v)]\geq k\sum_{v\in T}[f(v)-d_{G-S}(v)]+(k-1)|\mathcal{O}_2|.\]
\end{claim}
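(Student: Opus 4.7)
The first equality is immediate from the handshake identity applied to the set $T$: each edge incident to $T$ either has both ends in $T$ (contributing $2$ to $\sum_{v\in T}d_G(v)$ and $2$ to $\sum_{v\in T}d_{G-S}(v)$), or joins $T$ to $V(G)-S-T$ (contributing $1$ to each of the two sums), or joins $T$ to $S$ (contributing $1$ only to the first). Subtracting the two sums leaves precisely $e(S,T)$.

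For the main inequality I would substitute the identity $d_G(v)=kf(v)$ from Claim~\ref{basic}(1) and collapse both sides by direct algebra:
\[
\sum_{v\in T}\bigl[d_G(v)-d_{G-S}(v)\bigr]-k\sum_{v\in T}\bigl[f(v)-d_{G-S}(v)\bigr]
=\sum_{v\in T}\bigl[kf(v)-kf(v)+(k-1)d_{G-S}(v)\bigr]
=(k-1)\sum_{v\in T}d_{G-S}(v).
\]
Hence the claim is equivalent to $(k-1)\sum_{v\in T}d_{G-S}(v)\geq(k-1)|\mathcal{O}_2|$, which (for $k\geq 3$; the case $k=1$ is the trivial $0\geq 0$) reduces to the single estimate $\sum_{v\in T}d_{G-S}(v)\geq|\mathcal{O}_2|$.

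This last estimate is where the definition of $\mathcal{O}_2$ finally enters. By its definition, every $U\in\mathcal{O}_2$ satisfies $e(U,T)\geq 1$; and since the components of $G-S-T$ are pairwise vertex-disjoint and disjoint from $T$, the edge sets $E(U,T)$ for distinct $U\in\mathcal{O}_2$ are pairwise disjoint subsets of those edges of $G-S$ with an endpoint in $T$. Therefore
\[
\sum_{v\in T}d_{G-S}(v)\;\geq\;\sum_{U\in\mathcal{O}_2}e(U,T)\;\geq\;|\mathcal{O}_2|,
\]
completing the argument. The only real content is spotting the cancellation produced by $d_G(v)=kf(v)$; beyond that, the proof is pure bookkeeping. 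I do not expect any genuine obstacle here. The partition $\{Q_1,Q_2,Q_3,Q_4\}$ of $T$ introduced just before Claim~\ref{U-S} plays no role in this particular claim, but I expect it to be needed later, when Claims~\ref{U-S} and~\ref{ST} are combined with Tutte's inequality~\eqref{EQ: tutte-factor-1} to finish the proof of Lemma~\ref{LE: 1-2-factor}.
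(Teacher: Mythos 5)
Your proof is correct and is essentially the paper's argument: the same cancellation via $d_G(v)=kf(v)$ reduces the claim to $\sum_{v\in T}d_{G-S}(v)\geq|\mathcal{O}_2|$, which both you and the paper obtain by counting, for each $U\in\mathcal{O}_2$, at least one edge of $E(U,T)$. The only difference is that you bypass the $\{Q_1,Q_2,Q_3,Q_4\}$ partition (the paper runs the identical algebra piecewise over the $Q_i$ and sums over $Q_3\cup Q_4$ instead of $T$, which is the same sum since $d_{G-S}$ vanishes on $Q_1\cup Q_2$), so your version is a mild streamlining rather than a different route.
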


\begin{proof}
Since $d_{G-S}(v) = 0$ if $v \in Q_1\cup Q_2$ and $d_{G-S}(v) = 1$ if $v \in Q_3$, we have
\begin{equation}
\label{T1}
\sum_{v\in Q_1\cup Q_2\cup Q_3}[d_G(v)-d_{G-S}(v)]=k\sum_{v\in Q_1\cup Q_2\cup Q_3}[f(v)-d_{G-S}(v)]+(k-1)\sum_{v\in Q_3}d_{G-S}(v).
\end{equation}

Since $kf(v) = d_G(v)$ for each vertex $v$, we have
\begin{equation}
\label{TQ4}
\sum_{v\in Q_4} [d_G(v) - d_{G-S}(v)] = \sum_{v\in Q_4} [kf(v) - d_{G-S}(v)]
= k\sum_{v\in Q_4} [f(v) - d_{G-S}(v)] + (k-1)\sum_{v\in Q_4} d_{G-S}(v).
\end{equation}

Combining (\ref{T1}) and (\ref{TQ4}), we have
\begin{equation}
\label{TQ5}
\sum_{v\in T} [d_G(v) - d_{G-S}(v)] = k \sum_{v\in T} [f(v) - d_{G-S}(v)] + (k-1) \sum_{v\in Q_3\cup Q_4} d_{G-S}(v).
\end{equation}

Since each vertex $v \in Q_3 \cup Q_4$ is adjacent to at most $d_{G-S}(v)$ components in $\mathcal{O}_2$, we have
\begin{equation}
\label{Q-O}
\sum_{v\in Q_3\cup Q_4} d_{G-S}(v) \geq |\mathcal{O}_2|.
\end{equation}

Combining (\ref{TQ5}) and (\ref{Q-O}), we have
\[e(S,T) \geq k \sum_{v\in T} [f(v) - d_{G-S}(v)] + (k-1)|\mathcal{O}_2|.\]
\end{proof}

Denote $S^{\mathrm{c}}=V(G)-S$. Now we are to estimate $e(S,S^{\mathrm{c}})$
in two ways by finding a lower bound and an upper bound. Obviously,
\begin{equation}
\label{Supper}
e(S,S^c) \leq \sum_{v\in S} d_G(v) = k \sum_{v\in S} f(v).
\end{equation}

On the other hand,
\begin{equation}
\label{Slower}
e(S,S^c)
\geq e(S,T) + \sum_{U\in \mathcal{O}} e(S,U).
\end{equation}

By (\ref{Supper}) and (\ref{Slower}) together with Claims~\ref{U-S} and \ref{ST}, we have
\begin{equation}
\label{final}
\begin{split}
k \sum_{v\in S} f(v)
& \geq k \sum_{v\in T} [f(v) - d_{G-S}(v)] + (k-1)|\mathcal{O}_2| + k |\mathcal{O}_1| + |\mathcal{O}_2|\\
& = k \sum_{v\in T} [f(v) - d_{G-S}(v)] + k(|\mathcal{O}_1|+ |\mathcal{O}_2|) \\
& = k \left(\sum_{v\in T} [f(v) - d_{G-S}(v)] + |\mathcal{O}|\right).
\end{split}
\end{equation}

By (\ref{final}), we have
\[\sum_{v\in S} f(v) \geq |\mathcal{O}| + \sum_{v\in T} [f(v) - d_{G-S}(v)]. \]
Therefore, by Theorem~\ref{LE: tutte-factor}, $G$ has an $f$-factor.
\end{Proof}

\begin{lemma}
\label{LE: mu-p-factor}
Let $G$ be a graph with odd-edge-connectivity at least $(2p+1)$.
If there is a mapping $\mu: V(G)\rightarrow\mathbb Z^{+}$ such that
$d_{G}(v) = (2p+1)\mu(v)$ for each vertex $v\in V(G)$,
then there is a spanning subgraph $F$ such that $d_F(v) = p\mu(v)$.
\end{lemma}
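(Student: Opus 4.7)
The plan is to verify Tutte's $f$-factor condition (Theorem~\ref{LE: tutte-factor}) for $f(v)=p\mu(v)$, in the same spirit as the proof of Lemma~\ref{LE: 1-2-factor}. Fix disjoint $S,T\subseteq V(G)$, set $R=V(G)\setminus(S\cup T)$, and let $\mathcal{O}=\mathcal{O}(S,T)$.

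First, I would recast the parity condition~\eqref{EQ: odd-component} defining $\mathcal{O}$. From $d_G(v)=(2p+1)\mu(v)$, the handshake identity $\sum_{v\in U}d_G(v)=2|E(G[U])|+e(U,S)+e(U,T)$, and $2p+1\equiv 1\pmod 2$, one gets $\sum_{v\in U}\mu(v)\equiv e(U,S)+e(U,T)\pmod 2$. Substituting into $\sum_{v\in U}p\mu(v)+e(U,T)\equiv 1\pmod 2$ shows that for every $U\in\mathcal{O}$ the linear combination $p\cdot e(U,S)+(p+1)\cdot e(U,T)$ is odd.

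Next, I would establish the key weighted inequality: for every $U\in\mathcal{O}$, $p\cdot e(U,S)+(p+1)\cdot e(U,T)\geq 2p+1$. If $e(U,T)=0$, the oddness above forces $p$ to be odd and $e(U,S)$ to be odd, and since $e(U,V\setminus U)=e(U,S)$ is then an odd edge-cut, odd-$(2p+1)$-edge-connectivity gives $e(U,S)\geq 2p+1$, so the left-hand side is at least $p(2p+1)\geq 2p+1$. Symmetrically, if $e(U,S)=0$, then $p$ is even, $e(U,T)\geq 2p+1$, and the left-hand side is at least $(p+1)(2p+1)\geq 2p+1$. Otherwise both $e(U,S),e(U,T)\geq 1$ and the left-hand side is at least $p+(p+1)=2p+1$.

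Finally, summing this key inequality over $U\in\mathcal{O}$ and using $e(S,R)\geq\sum_{U\in\mathcal{O}}e(S,U)$ and $e(T,R)\geq\sum_{U\in\mathcal{O}}e(T,U)$, together with the expansion $p\sum_{v\in S}d_G(v)+(p+1)\sum_{v\in T}d_G(v)=2p|E(G[S])|+2(p+1)|E(G[T])|+(2p+1)e(S,T)+p\cdot e(S,R)+(p+1)e(T,R)$, would give $p\sum_{v\in S}d_G(v)+(p+1)\sum_{v\in T}d_G(v)-(2p+1)e(S,T)\geq (2p+1)|\mathcal{O}|$. Since $(2p+1)f(v)=p\cdot d_G(v)$ and $\sum_{v\in T}d_{G-S}(v)=\sum_{v\in T}d_G(v)-e(S,T)$, this is exactly Tutte's inequality $\sum_{v\in S}f(v)\geq|\mathcal{O}|+\sum_{v\in T}[f(v)-d_{G-S}(v)]$ multiplied by $2p+1$, so Theorem~\ref{LE: tutte-factor} produces the required $f$-factor. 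The main obstacle is securing the weighted lower bound in the third paragraph: one has to combine the correct parity consequence of the oddness condition with the odd-$(2p+1)$-edge-connectivity hypothesis, handling separately the two degenerate cases in which either $e(U,S)$ or $e(U,T)$ vanishes; everything else is algebraic bookkeeping that runs parallel to the proof of Lemma~\ref{LE: 1-2-factor}.
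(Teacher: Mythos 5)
Your proposal is correct, and it takes a genuinely different route from the paper. The paper proves Lemma~\ref{LE: mu-p-factor} by a multi-stage construction: it first applies Corollary~\ref{CORO: splitting} repeatedly to reduce every degree to $2p+1$ or $2(2p+1)$ while preserving odd-$(2p+1)$-edge-connectivity, then invokes Lemma~\ref{LE: 1-2-factor} (itself a Tutte $f$-factor computation) to extract a $\{1,2\}$-factor $F_0$, deletes $F_0$, splits the remainder into a $2p$-regular graph, applies Petersen's $2$-factorization, and finally assembles $F$ from $F_0$ and half of the $2$-factors, with a case split on the parity of $p$. You instead verify Tutte's condition directly on the original graph with $f=p\mu$. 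The heart of your argument --- the weighted bound $p\,e(U,S)+(p+1)\,e(U,T)\geq 2p+1$ for each $U\in\mathcal{O}$, obtained by combining the parity consequence $p\,e(U,S)+(p+1)\,e(U,T)\equiv 1\pmod 2$ with the odd-$(2p+1)$-edge-connectivity in the two degenerate cases $e(U,T)=0$ and $e(U,S)=0$ --- is sound (in each degenerate case $E(U,V(G)\setminus U)$ is an odd edge cut, hence of size at least $2p+1$), and the algebra translating $p\sum_{S}d_G+(p+1)\sum_{T}d_G-(2p+1)e(S,T)\geq(2p+1)|\mathcal{O}|$ back into Tutte's inequality checks out, since $(2p+1)f(v)=p\,d_G(v)$. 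Your route is shorter and more self-contained: it needs neither Petersen's theorem, nor the splitting corollary, nor the parity-of-$p$ case analysis, and it would render Lemma~\ref{LE: 1-2-factor} unnecessary as an intermediate step. What the paper's approach buys in exchange is an explicit, constructive description of $F$ and reuse of the splitting machinery (Theorem~\ref{TH: splitting}) that it must develop anyway for the final step of Theorem~\ref{TH: mod-circular}; your argument leans entirely on Tutte's theorem as a black box.
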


\begin{Proof}
For each vertex $v$ with $d_{G}(v)\notin\{2p+1, 2(2p+1)\}$,
we first apply Corollary~\ref{CORO: splitting} to $v$ with $a = 2(2p+1)$ and $\lambda_o=2p+1$.
Repeatedly apply this process until the degree of every vertex is either $(2p+1)$ or $2(2p+1)$.
Let $G'$ denote the resulting graph.

Next we apply Lemma~\ref{LE: 1-2-factor} to $G'$ with $k=2p+1$.
Let $F_0$ be a $\{1,2\}$-factor of $G'$ such that, for each $v\in V(G')$,
$d_{F_0}(v)=1$ if $d_{G'}(v)=2p+1$ and $d_{F_0}(v)=2$ if $d_{G'}(v)=2(2p+1)$.

Let $G''=G'-E(F_0)$. Split each vertex $v$ of $G''$ with $d_{G''}(v)=4p$
into a pair of degree $2p$ vertices (no need to preserve the odd-edge-connectivity here).
Let $G'''$ be the resulting $2p$-regular graph.
By Petersen's Theorem, $G'''$ has a $2$-factorization $\{F_1,\ldots, F_p\}$.

When $p$ is even, say $p=2q$, the subgraph $F$ induced by the edges of $F_1,\ldots, F_q$ is a desired spanning subgraph.
When $p$ is odd, say $p=2q+1$, the subgraph $F$ induced by the edges of $F_0, F_1,\ldots, F_q$ is a desired spanning subgraph.
\end{Proof}

\subsection{Completion of the proof of Theorem~\ref{TH: mod-circular} }

Now we are ready to complete the proof of Theorem~\ref{TH: mod-circular}.

It is obvious that {\bf(C)} implies {\bf(B)}.
Since {\bf(A)} and {\bf(B)} in Theorem~\ref{TH: Jaeger} are equivalent,
we will prove that {\bf(A)} implies {\bf(C)}.

Let $(G,\sigma)$ be an odd-$(2p+1)$-edge-connected signed graph and
$\tau$ be a modulo $(2p+1)$-orientation of $(G,\sigma)$.
We are going to show that $(G,\sigma)$ has an integer-valued circular $(2+\frac{1}{p})$-flow.

For each $v\in V(G)$, denote $H_{\tau}^{+}(v)=\{h \in H(v)\colon \tau(v)=1\}$
and $H_{\tau}^{-}(v)=\{h \in H(v)\colon \tau(v)=-1\}$.
Let $d_{\tau}^{+}(v) = |H_{\tau}^{+}(v)|$ and $d_{\tau}^{-}(v) = |H_{\tau}^{-}(v)|$.
If both $d_{\tau}^{+}(v) >0$ and $d_{\tau}^{-}(v) > 0$ for some vertex $v$,
then by Theorem~\ref{TH: splitting} with
$S(v) = \{(e',e'') \colon e'\in H_{\tau}^{+}(v)~\text{and}~e''\in H_{\tau}^{-}(v)\}$,
one can split a pair of half-edges
(one from $H_{\tau}^{+}(v)$ and the other from $H_{\tau}^{-}(v)$) away from $v$
and then suppress the resulting degree $2$ vertex.
Let $G'$ be the resulting graph obtained from $G$ by repeatedly
applying Theorem~\ref{TH: splitting} until no such pair of edges exits.
Then $G'$ remains odd-$(2p+1)$-edge-connected.
Since $\tau$ remains a modulo $(2p+1)$-orientation of $(G',\sigma)$ and
either $d_{\tau}^+(v) =0$ or $d_{\tau}^-(v)=0$ for each vertex $v$ of $G'$,
there is a mapping $\mu$ of $G'\colon V(G')\rightarrow\mathbb Z^+$ such that $d_{G'}(v) = (2p+1)\mu(v)$.

By Lemma~\ref{LE: mu-p-factor}, $G'$ has a spanning subgraph $F$ such that
$d_F(v)=p\mu(v)$. Then the integer-valued function $f^*$ defined as follows
is a circular $(2+\frac{1}{p})$-flow of $(G, \sigma)$:
\[ f^*(e) =\left\{\begin{array}{ll}
 p &\mbox{if $e\not\in F$};\\
 -p-1 &\mbox{if $e\in F$}.\end{array}\right.\]
\rule{3mm}{3mm}\par\medskip


\end{document}